\newtheorem{theoa}{Theorem}
\newtheorem{theorem}{Theorem}
\newtheorem{lemma}{Lemma}
\newtheorem*{thm}{Theorem}
\newtheorem{corollary}{Corollary}
\theoremstyle{definition}
\theoremstyle{remark}
\newcommand{\e}{\mathrm{e}}
\newcommand{\im}{\mathrm{i}}
\newcommand{\dif}{\mathrm{d}}
\newcommand{\Z}{\mathbb{Z}}
\newcommand{\abs}[1]{|#1|}
\newcommand{\Abs}[1]{\left|#1\right|}
\newcommand{\norm}[1]{\|#1\|}
\newcommand{\Norm}[1]{\left\|#1\right\|}
\newcommand{\inner}[2]{\left\langle #1|#2 \right\rangle}
\newcommand{\seq}[1]{(#1)}
\newcommand{\Seq}[1]{\left(#1\right)}
\newcommand{\set}[1]{\big\{#1\big\}}
\newcommand{\Set}[1]{\left\{#1\right\}}
\newcommand{\loc}{\mathrm{loc}}
\newcommand{\N}{\mathbb{N}}
\newcommand{\R}{\mathbb{R}}
\newcommand{\T}{\mathbb{T}}
\newcommand{\C}{\mathbb{C}}
\newcommand{\Pri}{\mathbb{P}}
\newcommand{\Bigoh}[1]{\mathcal{O} \left( #1 \right)}
\newcommand{\littleoh}{o}
\newcommand{\Littleoh}[1]{o \left( #1 \right)}
\newcommand{\Hp}{\mathscr{H}}
\renewcommand{\Re}{\operatorname{Re}}
\title
{Modified zeta functions as kernels of integral operators}
\author{Jan-Fredrik Olsen}
\address{Department of Mathematical Sciences, Norwegian University of
Science and Technology (NTNU), NO-7491 Trondheim, Norway}
\thanks{The author is supported by the Research Council of Norway grant 160192/V30. }
\begin{document}

\subjclass[2000]{30B50 (primary), 11M45, 47G10 (secondary)}

\begin{abstract}
	The modified zeta functions $\sum_{n \in K} n^{-s}$, where $K \subset \N$, converge absolutely for $\Re s > 1/2$. These generalise the Riemann zeta function which is known to have a meromorphic continuation to all of $\C$ with a single pole at $s=1$. 
	Our main result is a characterisation of the modified zeta functions that have pole-like behaviour at this point.
	This behaviour is defined
	by considering the modified zeta functions as kernels of certain integral operators on the spaces $L^2(I)$ for symmetric and bounded intervals $I \subset \R$. 
	We also consider the special case when the set $K \subset \N$ is assumed to have arithmetic structure. In particular, we look at local $L^p$ integrability properties of the modified zeta functions on the abscissa $\Re s=1$ for $p \in [1,\infty]$.
\end{abstract}

\maketitle

\section{Introduction}

We consider the behaviour of the modified zeta functions defined by
\begin{equation}  \label{definition: k-zeta function}
	\zeta_K(s) = \sum_{n \in K} \frac{1}{n^{s}}, \quad K \subset \N,
\end{equation}
near the point $s=1$. Here $s = \sigma + \im t$ denotes the complex variable. The infinite series defining these functions converge absolutely in the half-plane $\sigma > 1$. We refer to these as $K$-zeta functions. Note that for $K = \N$, the formula \eqref{definition: k-zeta function} defines the Riemann zeta function. 

The main objective of this paper is, for general $K \subset \N$, to find an operator-theoretic generalisation of the classical result, due to B.~Riemann, that the Riemann zeta function can be expressed as
\begin{equation} \label{asymptotic formula}
	 \zeta(s) = \frac{1}{s-1} + \psi(s),
\end{equation}
where $\psi$ is an entire function. (See \cite{edwards74} for an extensive discussion, as well as an English translation, of Riemann's original paper.)

To motivate this approach, we note that although N.~Kurokawa \cite{kurokawa87} found sufficient conditions on the sets $K$ for $\zeta_K$ to have an analytic continuation across the abscissa $\sigma = 1$, it was shown by
J.-P.~Kahane and H.~Queffelec \cite{kahane73, queffelec80} that for most choices of the subset $K$, in the sense of Baire categories, the $K$-zeta functions have the abscissa $\sigma=1$ as a natural boundary. So, instead of looking at the formula \eqref{asymptotic formula} as a statement about analytic continuation, we consider it as saying that the local behaviour of $\zeta(s)$ at $s=1$ is an analytic, and therefore small, perturbation of a pole with residue one. 

To interpret this in operator-theoretic terms, we define, for $K \subset \N$ and intervals $I$ of the form $(-T,T)$ with finite $T>0$, the family of operators
\begin{equation*} 
	\mathcal{Z}_{K,I}: g \in L^2(I) \\ \longmapsto \lim_{\delta\rightarrow 0}\frac{\chi_{I}(t)}{\pi} \int_I  g(\tau) \Re \zeta_K(1 + \delta + \im (t-\tau))  \dif \tau \in L^2(I).
\end{equation*}
Here  the characteristic function $\chi_{I}$ is applied to  emphasise that we look at $L^2(I)$ as a subspace of $L^2(\R)$.
To understand these operators, we consider the example $K=\N$. The formula \eqref{asymptotic formula} implies
\begin{equation*} 
	 \Re \zeta_\N ( 1+ \delta + \im t ) = \frac{\delta}{\delta^2 + t^2} + \Re \psi (1 + \delta + \im t),
\end{equation*}
whence
\begin{equation} \label{operator asymptotic}
	 \mathcal{Z}_{\N,I} = \mathrm{Id} + \Psi_{\N,I},
\end{equation}
for a compact operator $\Psi_{\N,I}$ and the identity operator $\mathrm{Id}$.
Indeed, the term $\pi^{-1}\delta/(\delta^2 + t^2)$ is the Poisson kernel which, under convolution, gives rise to the identity operator, while convolution with continuous kernels give compact operators (see Lemma \ref{convolution lemma}). Hence, $\mathcal{Z}_{\N,I}$ is a compact, and therefore a small perturbation of the identity operator.

In Theorem \ref{formula theorem}, we generalise the above formula in the following manner. We show that given $K\subset \N$, and a bounded and symmetric interval $I \subset \R$, there exist a subset $L \subset \R$ and a compact operator $\Phi_{K,I}$ such that
\begin{equation*}
	 \mathcal{Z}_{K,I} = \chi_I \mathcal{F}^{-1} \chi_L \mathcal{F} +  \Phi_{K,I}.
\end{equation*}
We remark that, intuitively, large $K \subset \N$ should correspond to large $L \subset \R$. In fact, it follows from our construction (see \eqref{definisjon av L} below) that if $K = \N$ then
$L=\R$. Hence, $\chi_I \mathcal{F}^{-1} \chi_L \mathcal{F} = \mathrm{Id}$ and we obtain again the formula \eqref{operator asymptotic}.

For general $K$ we present two additional results.
In Theorem \ref{lower bound theorem}, we characterise for which $K$ the operator $Z_{K,I}$ is bounded below.
By a stability theorem of semi-Fredholm theory, it turns out that this takes place if and only if the principal term of \eqref{generalised asymptotic formula}, the operator $\chi_I \mathcal{F}^{-1} \chi_L \mathcal{F}$, is bounded below. So, heuristically, a lower norm bound may be thought of as detecting the presence of a mass in the kernel of the integral operator $\mathcal{Z}_{K,I}$. 
Along with a result by B.~Panejah \cite{panejah66}, this enables us to show that the operator $\mathcal{Z}_{K,I}$ is bounded below in norm exactly for the sets $K$ for which there exists some $\delta \in (0,1)$ such that
\begin{equation} \label{intro density 1}
	\liminf_{x \rightarrow \infty} 	\frac{\pi_K(x) - \pi_K(\delta x)}{x} > 0,
\end{equation}
where $\pi_K(x)$ is the counting function of $K$. Note that this condition is independent of the interval $I$.
In Theorem \ref{identity theorem}, we obtain a complete characterisation of when $\mathcal{Z}_{K,I}$ satisfies a formula of the type \eqref{operator asymptotic}, in the sense that it is a compact perturbations of a scalar multiple of the identity operator; this happens if and only if the limit
\begin{equation} \label{intro density 2}
	 \lim_{x \rightarrow \infty} \frac{\pi_K(x)}{x}
\end{equation}
exists.
Theorem \ref{identity theorem} closely mirrors a recent generalisation, due to J.~Korevaar, of a classical tauberian result of S.~Ikehara \cite{ikehara31}.
Indeed, a formula of the type \eqref{operator asymptotic} holds if and only if a formula of the type \eqref{asymptotic formula} holds, where the appropriate substitute for $\psi$ extends to a nicely behaved distribution on the abscissa $\sigma=1$.

We also consider $K$-zeta functions for which $K \subset \N$ is assumed to have arithmetic structure. More specifically, we look at the case when $K$ consists exactly of the integers whose prime number decomposition contain only factors belonging to some specified subset $Q$ of the prime numbers $\Pri$. In this setting, the limit \eqref{intro density 2}  always exists, implying that theorems \ref{formula theorem} to \ref{identity theorem} are simplified. We state this as Theorem \ref{arithmetic theorem}. The condition \eqref{intro density 1} is now  expressed as
\begin{equation*}
	 \sum_{p \in \Pri \backslash Q} \frac{1}{p} < \infty.
\end{equation*}
Also, more detailed information about the $K$-zeta functions, in terms of certain $L^p$ estimates on the abscissa $\sigma=1$, is obtained. This is stated in Theorem   \ref{arithmetic theorem 2}. Finally, in Theorem \ref{beurling theorem}, we show that a prime number theorem for the subset of the primes $Q$, in a sense to be defined, neither implies, nor is implied by, a lower norm bound for the operators $Z_{K,I}$.

We make some remarks on related work. The operator $\mathcal{Z}_{\N,I}$ appears in the context of Hilbert spaces of Dirichlet series in a paper by J.-F. Olsen and E. Saksman \cite{olsen_saksman09} (see also section \ref{concluding remarks}). In work done by J.-P. Kahane \cite{kahane98}, a closely related functional is used to give a proof of the classical prime number theorem. Also, F.~Moricz \cite{moricz99} found precise estimates of the $L^p$ norms of a class of functions containing the $K$-zeta functions, along the segment $s \in (1,2)$ in terms of the counting functions $\pi_K$.

The structure of this paper is as follows. In sections \ref{section: generalisation of riemanns formula} and  \ref{section: lower norm bound} we state and prove, respectively, theorems \ref{formula theorem} and \ref{lower bound theorem}. In the latter section, we also make some comments relating our results to frame theory. Next, the Ikehara-Korevaar theorem is stated along with Theorem  \ref{identity theorem} and its proof in section \ref{section: korevaar}. 
In sections \ref{label: two arithmetic results} and \ref{section: prime numbers} we present results obtained under the additional assumption that $K$ has arithmetic structure. These are theorems \ref{arithmetic theorem} and \ref{arithmetic theorem 2}  and \ref{beurling theorem}.
Finally, in section \ref{concluding remarks}, we give some concluding remarks.


\section{A generalisation of Riemanns formula} \label{section: generalisation of riemanns formula}

In this section we give a generalisation of the formula \eqref{asymptotic formula}. 
Note that we use the convention
\begin{equation*}
	 \mathcal{F} : g \longmapsto \hat{g}(t) = \frac{1}{\sqrt{2\pi}} \int_\R g(\xi) \e^{-\im \xi t} \dif \xi
\end{equation*}
for the Fourier transform on $L^2(\R)$. Moreover, for $I \subset \R$ we identify the space $L^2(I)$ with the subspace of $L^2(\R)$ consisting of functions with support in $I$. In this way, we give sense to the expression $\mathcal{F} g$ for $g \in L^2(I)$.
\begin{theorem} \label{formula theorem}
	Let $K \subset \N$ be arbitrary, $I \subset \R$ be a bounded and symmetric interval, 
	and 
	\begin{equation} \label{definisjon av L}
		L = 	\bigcup_{n \in K} \bigg( \big(-\log (n+1), -\log n \big] \cup \big[\log n, \log (n+1)\big) \bigg).
	\end{equation}
	Then there exists a compact operator $\Phi_{K,I}$ such that
	\begin{equation} \label{generalised asymptotic formula}
	 	\mathcal{Z}_{K,I} =  \chi_I \mathcal{F}^{-1} \chi_L \mathcal{F} + \Phi_{K,I}.
	\end{equation}
\end{theorem}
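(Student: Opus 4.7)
The plan is to split the kernel $\pi^{-1}\Re\zeta_K(1+\delta+\im u)$ of $\mathcal{Z}_{K,I}$ into a piece that realizes the Fourier multiplier $\chi_L$ after sending $\delta\to 0^+$, plus a remainder whose associated convolution operator on $L^2(I)$ is compact.

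First I would note the elementary identity, valid for each $n\in K$, $\delta\geq 0$, $u\in\R$,
\[
\int_{\log n}^{\log(n+1)}\e^{-\delta\xi}\cos(u\xi)\,\dif\xi = \frac{1}{2}\int_{(-\log(n+1),-\log n]\cup[\log n,\log(n+1))} \e^{\im u\xi-\delta\abs{\xi}}\,\dif\xi,
\]
which follows from $2\cos(u\xi)=\e^{\im u\xi}+\e^{-\im u\xi}$ together with the symmetry of the integration set. Summing over $n\in K$ and using the definition \eqref{definisjon av L} of $L$, we obtain the decomposition
\[
\Re\zeta_K(1+\delta+\im u)=\frac{1}{2}\int_L \e^{\im u\xi-\delta\abs{\xi}}\,\dif\xi + r_\delta(u),
\]
where the remainder is
\[
r_\delta(u):=\sum_{n\in K}\left[\frac{\cos(u\log n)}{n^{1+\delta}}-\int_{\log n}^{\log(n+1)}\e^{-\delta\xi}\cos(u\xi)\,\dif\xi\right].
\]
For $\delta>0$ both sides converge absolutely, so the decomposition is an honest identity.

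The central technical step is to show that $r_\delta(u)$ converges uniformly on $[-2T,2T]$ (where $I=(-T,T)$) to a continuous function $r_0$ as $\delta\to 0^+$. Setting $f_{\delta,u}(\xi):=\e^{-\delta\xi}\cos(u\xi)$, the $n$-th summand of $r_\delta$ can be rewritten as
\[
f_{\delta,u}(\log n)\bigl[\tfrac{1}{n}-\log\tfrac{n+1}{n}\bigr] + \int_{\log n}^{\log(n+1)}\bigl[f_{\delta,u}(\log n)-f_{\delta,u}(\xi)\bigr]\,\dif\xi.
\]
A Taylor expansion gives $0<\tfrac{1}{n}-\log(1+\tfrac{1}{n})\leq \tfrac{1}{2n^2}$, and the derivative bound $\abs{f_{\delta,u}'(\xi)}\leq \delta+\abs{u}$ on $[0,\infty)$ controls the second term by $(\delta+\abs{u})/n^2$. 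Hence each summand is $\bigoh((1+\abs{u})/n^2)$ uniformly for $\delta\in[0,1]$, so the series defining $r_\delta$ is absolutely and uniformly convergent on $[0,1]\times[-2T,2T]$; passing to the limit yields the claimed continuous $r_0$.

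To finish, I would identify the first term of the decomposition with the convolution kernel $(2\pi)^{-1}\int_L \e^{\im u\xi-\delta\abs{\xi}}\,\dif\xi$ of the Fourier multiplier $\chi_L(\xi)\e^{-\delta\abs{\xi}}$; since this multiplier converges pointwise and boundedly to $\chi_L$, the corresponding operator converges strongly on $L^2(\R)$ to $\mathcal{F}^{-1}\chi_L\mathcal{F}$, and composition with $\chi_I$ preserves strong convergence. The uniform convergence $r_\delta\to r_0$ on $[-2T,2T]$ means that the remainder part converges in operator norm to the operator $\Phi_{K,I}$ of convolution on $I$ with the continuous function $r_0/\pi$, which is compact by Lemma \ref{convolution lemma}. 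Combining the two limits both shows that the defining limit for $\mathcal{Z}_{K,I}$ exists strongly and establishes the stated identity. The only real obstacle is the uniform summability estimate for $r_\delta$: the naive series $\sum_{n\in K}\cos(u\log n)/n$ is not absolutely convergent, and it is precisely the subtraction of the Riemann-sum integral that exposes the hidden $\bigoh(1/n^2)$ cancellation.
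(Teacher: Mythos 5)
Your proof is correct and follows essentially the same strategy as the paper: decompose the kernel of $\mathcal{Z}_{K,I}$ into the multiplier piece for $\chi_L$ plus a remainder, then show the remainder kernel is continuous (hence compact by Lemma~\ref{convolution lemma}) via the same two-term Taylor-type estimate giving $\bigoh((1+\abs{u})/n^2)$. The only cosmetic difference is that you work directly with the kernel as a function of $u=t-\tau$ and treat the $\delta\to 0^+$ limit explicitly (strong convergence of the multiplier part, norm convergence of the remainder), whereas the paper applies the operator to test functions in $\mathcal{C}_0^\infty(I)$ and leaves the passage to $\delta=0$ implicit.
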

\begin{proof}
The theorem is essentially the observation that by expanding the Dirichlet series of $2\Re  \zeta_K(s) = \zeta_K(s) + \overline{\zeta_K(s)}$, we get
\begin{equation} \label{z sum formula}
	 \mathcal{Z}_{K,I} g(t)  = \frac{\chi_I(t)}{\sqrt{2\pi}} \sum_{n \in K} \left(\frac{\hat{g}(\log n)}{n} n^{\im t} + \frac{\hat{g}(-\log n) }{n} n^{-\im t} \right).
\end{equation}
Indeed, the formula \eqref{generalised asymptotic formula}  follows if we show 
that for $g \in  \mathcal{C}^\infty_0(I)$ the difference
	\begin{equation*}
		\sum_{n \in K} \left( \frac{\hat{g}(\log n)}{n} n^{\im t} + \frac{\hat{g}(-\log n)}{n} n^{-\im t} \right) - \int_{L} \hat{g}(\xi)  \e^{- \im t \xi}\dif \xi
	\end{equation*}	
	is given by a compact operator $\Phi$. 
	Note that for $g \in \mathcal{C}^\infty_0(I)$ this sum converges absolutely since $\hat{g}(\xi) = \Bigoh{(1 + \xi^2)^{-1}}$. 
	In particular, since the Fourier transform is bounded, this implies that $\mathcal{Z}_{K,I}$ is bounded as an operator on $L^2(I)$. (This also follows from the fact that $\mathcal{Z}_{\N,I}$ is bounded and the remarks following Theorem \ref{lower bound theorem} relating these operators to frame theory.)

	Recall that 
	for $K\subset \N$,
	\begin{equation*} 
		L = 	\bigcup_{n \in K} \bigg( \big(-\log (n+1), -\log n \big] \cup \big[\log n, \log (n+1)\big) \bigg).
	\end{equation*}
	In order to simplify notation, we set
	$L_+ = L \cap (0,\infty)$ and consider the difference of only the positive frequencies,
	\begin{equation} \label{the positive frequencies}
		\sum_{n \in K} \frac{\hat{g}(\log n)}{n} n^{\im t}  - \int_{L_+} \hat{g}(\xi)  \e^{ \im t \xi} \dif \xi.
	\end{equation}
	It suffices to show that this is given by 
	a compact operator, say $2\pi \Phi_{+}$. 
	The same argument then works on the negative frequencies by taking complex conjugates, giving 
	us a compact operator $2\pi \Phi_-$. With the choice $\Phi = \Phi_+ + \Phi_-$, the proof is complete.
		
	By adding and subtracting intermediate terms, we see that the difference (\ref{the positive frequencies}) can be expressed as
	\begin{multline*}
		\underbrace{\sum_{n\in K} \frac{1}{n \log(1+\frac{1}{n})} \int_{L_n} \left( \hat{g}(\log n) n^{\im t} - \hat{g}(\xi) \e^{\im t \xi} \right) \dif \xi}_{(I)} \\ + \underbrace{\sum_{n\in K}  \left( \frac{1}{n \log(1+\frac{1}{n}) } -1 \right) \int_{L_n} \hat{g}(\xi) \e^{\im t \xi}  \dif \xi}_{(II)}.
	\end{multline*}
	
We want to interchange the integral and sum signs in these expressions. For $(I)$, it suffices to show that 
\begin{equation} \label{justify change}
	 \sum_{n \in K} \int_{L_n} \abs{\hat{g}(\log n) n^{\im t} - \hat{g}(\xi) \e^{\im \xi t}} \dif \xi \leq C \norm{g}_{L^2(I)}.
\end{equation}
for some constant $C>0$.
Note that by expressing the difference inside the absolute value as a definite integral, we have
\begin{equation*}
	\int_{L_n} \abs{n^{\im t} - \e^{\im t \xi}} \dif \xi \leq \abs{t} \frac{1}{n^2}.
	\end{equation*}
Pulling the absolute value sign inside of the expression for the Fourier transforms in combination with this inequality, gives us the bound
\begin{multline*}
	\int_{L_n} \abs{\hat{g}(\log n) n^{\im t} - \hat{g}(\xi) \e^{\im \xi t}} \dif \xi
	\leq
	\int_I \abs{g(\tau)} \int_{L_n} \abs{n^{\im (t-\tau)}- \e^{\im (t-\tau) \xi}}\dif \xi \dif \tau \\
	\leq 
	\frac{1}{n^2} \int_I \abs{t-\tau}\abs{g(\tau)} \dif \tau 
	\leq
	\frac{2\abs{I}}{n^2} \left(\int_I \abs{g(\tau)}^2 \dif \tau\right)^{1/2}.
\end{multline*}
Taking the sum, and using the Cauchy-Schwarz inequality, we get (\ref{justify change}) with constant $C = 2\abs{I} \zeta(4)^{1/2}$.
Interchanging the integral and sum signs, we get
\begin{equation*}
	(I) = \int_I g(\tau) \alpha(t-\tau) \dif \tau,
\end{equation*}
where
\begin{equation*}
	\alpha(\tau) = \frac{1}{\sqrt{2\pi}} \sum_{K} \frac{1}{n \log(1+1/n)} \int_{L_n} (n^{\im \tau} - \e^{\im \xi \tau}) \dif \xi.
\end{equation*}
By the same bound we used above, this sum converges absolutely and therefore the function $\alpha(t)$ is continuous on $I$. 
Similar arguments show that
\begin{equation*}
	(II) = \int_I g(\tau) \beta(t-\tau) \dif \tau,
\end{equation*}
where
\begin{equation*}
	\beta(\tau) = \sum_K \left(  \frac{1}{n \log(1+1/n)} - 1 \right) \int_{L_n} \e^{\im t \xi} \dif \xi
\end{equation*}
is a continuous function on $I$. Hence,
\begin{equation*}
	 2\pi \Phi_+ g(t) = \frac{1}{\sqrt{2\pi}} \int_I g(\tau) \Big(\alpha(t-\tau) + \beta(t-\tau)\Big) \dif \tau,
\end{equation*}
and so the compactness of $\Phi_+$ follows from Lemma \ref{convolution lemma}. By the comments of the first half of the proof this implies that $\Phi$ is also a compact operator.
\end{proof}

\section{Characterisation of $\mathcal{Z}_{K,I}$ which are bounded below in norm} \label{section: lower norm bound}
	
The following theorem explains when the operator $\mathcal{Z}_{K,I}$ is bounded below.
\begin{theorem} \label{lower bound theorem}
	Let $K \subset \N$ be arbitrary, $I \subset \R$ be a bounded and symmetric interval,
	and $L \subset \R$ be given by the relation (\ref{definisjon av L}).
	Then the followin{}g conditions are equivalent.
	\begin{gather}
		\mathcal{Z}_{K,I} \; \text{is bounded below on} \; L^2(I) \tag{a}\label{Z bounded below}  
		\\
		 \chi_I \mathcal{F}^{-1} \chi_L \mathcal{F} \; \text{is bounded below on} \; L^2(I) \tag{b} \label{fourier condition} 
		 \\
			\text{There exists} \; \delta \in (0,1) \; \text{such that} \;
			\liminf_{x \rightarrow \infty} \frac{\pi_K(x) - \pi_K(\delta x)}{x} > 0.\tag{c}\label{panejah's condition}
	\end{gather}
\end{theorem}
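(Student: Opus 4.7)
The plan is to combine Theorem~\ref{formula theorem}, which writes $\mathcal{Z}_{K,I}=B+\Phi$ with $B:=\chi_I\mathcal{F}^{-1}\chi_L\mathcal{F}$ and $\Phi$ compact, with Panejah's characterisation of $L^2$-dominating sets for Paley--Wiener spaces, and with the invariance of the essential spectrum of a self-adjoint operator under compact perturbations. To set the stage, I would check that both $B$ and $\mathcal{Z}_{K,I}$ are positive and self-adjoint on $L^2(I)$: Plancherel gives $\langle Bg,g\rangle=\int_L|\hat g(\xi)|^2\,\dif\xi\geq 0$, while expanding~\eqref{z sum formula} yields $\langle\mathcal{Z}_{K,I}g,g\rangle$ as a nonnegative series in $|\hat g(\pm\log n)|^2/n$. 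Condition (b) then reads: there is $c>0$ with $\int_L|\hat g|^2\geq c\|g\|^2$ for every $g\in L^2(I)$; equivalently, $L$ is a dominating set for the Paley--Wiener space $\mathcal{F}(L^2(I))$.

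For (b) $\Leftrightarrow$ (c), I would invoke the result of Panejah, which identifies dominating sets as those satisfying $|L\cap(y-r,y+r)|\geq\gamma$ for some $r,\gamma>0$ and all sufficiently large $|y|$. Using the definition~\eqref{definisjon av L} together with $\log(n+1)-\log n\asymp 1/n$, the $L$-measure of a window $(\log(\delta x),\log x]$ is comparable to $\sum_{n\in K,\,\delta x<n\leq x}1/n$, and hence to $(\pi_K(x)-\pi_K(\delta x))/x$. The substitution $y=\log x$ with $\delta=e^{-2r}$ then turns Panejah's geometric density condition on $L$ into the arithmetic counting condition (c) on $K$, with the symmetry of $L$ taking care of the negative-$y$ direction automatically.

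For (a) $\Leftrightarrow$ (b), the Weyl-type equality $\sigma_{\mathrm{ess}}(\mathcal{Z}_{K,I})=\sigma_{\mathrm{ess}}(B)$, combined with the triviality of both kernels, closes the gap. The kernel of $B$ is always trivial: for $g\in L^2(I)$, the function $\hat g$ extends to an entire function of exponential type, and its vanishing on the positive-measure set $L$ forces $g=0$. The kernel of $\mathcal{Z}_{K,I}$ is trivial whenever (c) holds: any $g\in\ker\mathcal{Z}_{K,I}$ satisfies $\hat g(\pm\log n)=0$ for all $n\in K$, but Jensen's formula bounds the number of zeros of an entire function of exponential type in a disk of radius $R$ by $O(R)$, while (c) produces at least $cx$ such zeros inside any disk containing $\{\pm\log n: \delta x<n\leq x\}$, a contradiction for $x$ large. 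Since both operators are positive and self-adjoint with trivial kernel under the equivalent conditions, being bounded below reduces to $0\notin\sigma_{\mathrm{ess}}$, and (a) $\Leftrightarrow$ (b) follows. The main obstacle will be the precise invocation of Panejah's theorem in a form applicable to our $L$, and the careful verification that its measure-theoretic density condition matches the arithmetic form (c).
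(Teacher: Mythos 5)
Your proposal is correct, and the argument for the equivalence of \eqref{Z bounded below} and \eqref{fourier condition} takes a genuinely different route from the paper. The paper invokes the second stability theorem of semi-Fredholm theory (a Banach-space result recorded as Lemma~\ref{second stability theorem}): a bounded-below operator perturbed by a compact one remains bounded below, provided the sum is injective. You instead use that $\mathcal{Z}_{K,I}$ and $B=\chi_I\mathcal{F}^{-1}\chi_L\mathcal{F}$ are positive self-adjoint and that the compact perturbation $\Phi_{K,I}$ leaves the essential spectrum unchanged, so boundedness below reduces to $0\notin\sigma_{\mathrm{ess}}$ together with triviality of the kernel. Both approaches ultimately hinge on the same injectivity checks: $\ker B$ is trivial once $K\neq\emptyset$ (you should say ``whenever $K\neq\emptyset$'' rather than ``always,'' since $K=\emptyset$ gives $B=0$; also note that $Bg=0$ yields $\chi_L\hat g=0$ only through the positivity identity $\langle Bg,g\rangle=\int_L|\hat g|^2$, not directly), and $\ker\mathcal{Z}_{K,I}$ is trivial under \eqref{panejah's condition} by a zero-counting argument for entire functions of exponential type — you use Jensen, the paper uses the Cartwright-class density of zeros, and the paper actually proves the sharper fact that $\sum_{n\in K}n^{-1}=\infty$ already forces injectivity. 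Your route is cleaner in the Hilbert-space self-adjoint setting and makes the role of positivity explicit; the paper's semi-Fredholm lemma is more general but requires essentially the same injectivity work. For the equivalence of \eqref{fourier condition} and \eqref{panejah's condition} you and the paper do the same thing: reduce to the statement that $\int_L|\hat g|^2\gtrsim\|g\|^2$ (the paper does this by a direct two-sided norm estimate, you do it via the quadratic form and positivity), invoke Panejah, and convert the measure-density of $L$ into the counting condition on $K$ via $\log(n+1)-\log n\asymp 1/n$.
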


Before we give the proof, we mention a corollary of Theorem \ref{lower bound theorem}. Recall that a sequence of vectors $\seq{f_n}$ in some Hilbert space $H$ is called a frame if for all $f \in H$ there exists constants such that $\sum \abs{\inner{f}{f_n}}^2 \simeq \norm{f}^2$. It is a basic result of frame theory that such a sequence of vectors is a frame if and only if the operator defined by
\begin{equation*}
	 g \longmapsto \sum \inner{g}{f_n}f_n
\end{equation*}
is bounded and bounded below in norm (see e.g. \cite{christensen03}).
With this in mind, we define the sequence
\begin{equation} \label{the frame}
	\mathscr{G}_K = \Seq{ \ldots, \; \frac{3^{\im t}}{\sqrt{3}}, \; \frac{2^{\im t}}{\sqrt{2}}, \; 1, \; 1, \; \frac{2^{-\im t}}{\sqrt{2}}, \; \frac{3^{-\im t}}{\sqrt{3}}, \; \ldots },
\end{equation}
where $n$ is understood to run through $K \cup (-K)$. It is readily checked that equation \eqref{z sum formula} says exactly that $2\pi \mathcal{Z}_{K,I}$ is the frame operator of the sequence $\mathscr{G}_K$ when restricted to the space $L^2(I)$. Hence, we get the following.
\begin{corollary} \label{the corollary}
	Let $K \subset \N$ be arbitrary and $I \subset \R$ be a bounded and symmetric interval. Then the sequence of vectors $\mathscr{G}_K$ given by (\ref{the frame}), restricted to the interval $I$, forms a frame for $L^2(I)$ if and only if any of the conditions of Theorem \ref{lower bound theorem} holds.
\end{corollary}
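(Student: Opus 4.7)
The plan is to identify $2\pi \mathcal{Z}_{K,I}$ as the frame operator of the sequence $\mathscr{G}_K$ restricted to $L^2(I)$, and then to appeal to the standard characterisation of frames via the frame operator together with Theorem \ref{lower bound theorem}.

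First I would make the identification precise. With the Fourier convention used in this paper, writing $e_n^{\pm}(t) = n^{\pm \im t}/\sqrt{n}$ and using that $I$ is symmetric, one computes for $g \in L^2(I)$ that
\begin{equation*}
    \inner{g}{e_n^{\pm}} = \frac{1}{\sqrt{n}} \int_I g(\tau) n^{\mp \im \tau} \dif \tau = \frac{\sqrt{2\pi}}{\sqrt{n}} \hat{g}(\mp \log n).
\end{equation*}
Plugging these identities into the expansion \eqref{z sum formula} yields
\begin{equation*}
    2\pi \mathcal{Z}_{K,I} g = \chi_I \sum_{n \in K} \Big( \inner{g}{e_n^+} e_n^+ + \inner{g}{e_n^-} e_n^- \Big),
\end{equation*}
which is exactly the frame operator of $\mathscr{G}_K$ acting on the subspace $L^2(I)$ of $L^2(\R)$. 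The two copies of the constant function $1$ at the centre of $\mathscr{G}_K$ correspond to the case $n=1$, where the positive and negative branches coincide but must still be counted separately in the sum.

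Next I would invoke the standard fact from frame theory (see \cite{christensen03}) that a countable family of vectors in a Hilbert space is a frame if and only if its frame operator is both bounded and bounded below in operator norm. The upper bound for $\mathcal{Z}_{K,I}$ was already noted in the proof of Theorem \ref{formula theorem}, so frameness of $\mathscr{G}_K$ on $L^2(I)$ is equivalent to $\mathcal{Z}_{K,I}$ being bounded below on $L^2(I)$. By Theorem \ref{lower bound theorem}, this is in turn equivalent to either \eqref{fourier condition} or \eqref{panejah's condition}, which is the desired conclusion.

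I do not anticipate a substantive obstacle: once the identification in the first step is set up, the remainder is a direct application of cited results. The only place requiring a bit of care is tracking the $\sqrt{2\pi}$, $1/n$ and $1/\sqrt{n}$ factors so that the frame operator emerges as exactly $2\pi \mathcal{Z}_{K,I}$ rather than some other positive scalar multiple, and checking that convergence of the sum in the strong operator topology on $L^2(I)$ is legitimate; the latter already follows from the boundedness of $\mathcal{Z}_{K,I}$ established during the proof of Theorem \ref{formula theorem}.
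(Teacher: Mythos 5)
Your approach is exactly the paper's: identify $2\pi\mathcal{Z}_{K,I}$ as the frame operator of $\mathscr{G}_K$ on $L^2(I)$ via \eqref{z sum formula}, invoke the standard fact that a family is a frame iff its frame operator is bounded above and below, and then transfer the lower bound question to Theorem \ref{lower bound theorem}. One small slip worth fixing: with the paper's Fourier convention and the usual inner product $\inner{g}{h}=\int g\bar h$, one has $\int_I g(\tau)\,n^{\mp\im\tau}\,\dif\tau=\sqrt{2\pi}\,\hat g(\pm\log n)$, so $\inner{g}{e_n^{\pm}}=\tfrac{\sqrt{2\pi}}{\sqrt n}\hat g(\pm\log n)$ rather than $\hat g(\mp\log n)$; with this correction the terms match \eqref{z sum formula} term-by-term and the rest of the argument goes through unchanged.
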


	The proof of Theorem \ref{lower bound theorem} hinges in an essential way on the following lemma. 	
	\begin{lemma}[Second stability theorem of semi-Fredholm theory] \label{second stability theorem}
    	Let $X,Y$ be Banach spaces, let $Z : X \rightarrow Y$ 
		a continuous linear operator that is bounded below and $\Phi : X \rightarrow Y$
		be a compact operator. If $Z + \Phi$ is injective, then $Z + \Phi$ is bounded below.
	\end{lemma}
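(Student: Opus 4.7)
The plan is to argue by contradiction, exploiting the interplay between compactness (which gives convergence up to subsequences only for the image $\Phi x_n$) and being bounded below (which can upgrade convergence of $Zx_n$ to convergence of $x_n$ itself).

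Suppose $Z + \Phi$ is not bounded below. Then one can extract a sequence $\seq{x_n} \subset X$ with $\norm{x_n} = 1$ for every $n$ and $(Z+\Phi)x_n \to 0$ in $Y$. First I would use the compactness of $\Phi$: the bounded sequence $\seq{x_n}$ has, after passing to a subsequence, the property that $\Phi x_n$ converges to some $y \in Y$. Consequently $Zx_n \to -y$ in $Y$.

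Next I would exploit that $Z$ is bounded below, say $\norm{Zx} \geq c \norm{x}$ for some $c>0$. This forces $\Range(Z)$ to be closed, so $-y = Zx$ for some $x \in X$, and moreover the inequality $c \norm{x_n - x} \leq \norm{Z(x_n - x)} = \norm{Zx_n + y} \to 0$ implies $x_n \to x$ in $X$. In particular $\norm{x} = 1$, so $x \neq 0$. Passing to the limit in the definition of $x_n$ and using continuity of $Z + \Phi$ gives $(Z+\Phi)x = 0$, contradicting the injectivity of $Z+\Phi$.

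The conceptually important step is recognising that $Z$ being bounded below upgrades the weak information $Zx_n \to -y$ into norm convergence of $\seq{x_n}$ itself; everything else is standard bookkeeping. There is no real obstacle, since the hypotheses have been set up so that compactness supplies a candidate limit for $\Phi x_n$ and the lower bound on $Z$ pulls this back to a candidate kernel element of $Z+\Phi$.
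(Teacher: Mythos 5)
Your proof is correct and complete: negate the conclusion to extract a normalised sequence with $(Z+\Phi)x_n \to 0$, use compactness to get $\Phi x_{n_k} \to y$, deduce $Zx_{n_k} \to -y$, then use the lower bound on $Z$ to show $x_{n_k}$ is Cauchy, hence converges to some $x$ with $\|x\| = 1$ and $(Z+\Phi)x = 0$, contradicting injectivity. Note that the paper itself does not prove this lemma at all — it simply cites Kato \cite[p.~238, Thm.~5.26]{kato66} — so you have supplied a self-contained argument where the paper relies on a reference. Your route is the standard direct one and there is nothing to fix; the only cosmetic remark is that you do not actually need to invoke the abstract fact that bounded-below operators have closed range, since the lower bound applied to the Cauchy sequence $Zx_{n_k}$ already hands you the limit $x$ directly.
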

A proof of this lemma may be found in \cite[p. 238, Thm. 5.26]{kato66}.

\begin{proof}[Proof of Theorem \ref{lower bound theorem}]

	We proceed to show the equivalences $(b) \iff (c)$ and $(a) \iff (b)$.

$(b) \iff (c)$: Condition $(b)$ says that $\mathcal{F}^{-1} \chi_L \mathcal{F}$ is bounded below on $L^2(I)$. 
We begin by establishing that this is 
equivalent to $\chi_L \mathcal{F}$ being bounded below from $L^2(I)$ to $L^2(\R)$. Indeed,
	one direction is clear since
	\begin{equation*}
		\norm{\chi_L \mathcal{F} g}_{L^2(\R)} = \norm{\mathcal{F}^{-1}\chi_L \mathcal{F} g}_{L^2(\R)}
		\geq \norm{\chi_I \mathcal{F}^{-1}\chi_L \mathcal{F} g}_{L^2(I)}.
	\end{equation*}
		To prove the converse, assume that there exists some $\delta > 0$ such that for $g \in L^2(I)$
		\begin{equation} \label{hypothesis1}
	 		\norm{\chi_L \mathcal{F}g}_{L^2(\R)} \geq \delta \norm{g}_{L^2(I)}.
		\end{equation}
		Moreover, assume that for all $\epsilon > 0$ there exists an $g_\epsilon \in L^2(I)$ such that
		\begin{equation*}
	 		\norm{\chi_I\mathcal{F}^{-1} \chi_L \mathcal{F}g_\epsilon}_{L^2(I)} \leq \epsilon^2 \norm{g_\epsilon}_{L^2(I)}.
		\end{equation*}
		This implies that
		\begin{eqnarray*}
			\norm{\chi_I\mathcal{F}^{-1} \chi_{L^C} \mathcal{F}g_\epsilon }_{L^2(I)}
			&\geq&
			\norm{g_\epsilon }_{L^2(I)}
			-
			\norm{\chi_I \mathcal{F}^{-1} \chi_{L} \mathcal{F}g_\epsilon }_{L^2(I)} 
			\\
			&\geq&
			(1-\epsilon^2)\norm{g_\epsilon}_{L^2(I)}.
		\end{eqnarray*}
		On the other hand, the inequality (\ref{hypothesis1}) implies that
		\begin{eqnarray*}
			\norm{\chi_I\mathcal{F}^{-1}\chi_{L^C} \mathcal{F}g_\epsilon}_{L^2(I)}^2 
			&\leq& 
			\norm{ \mathcal{F}^{-1}\chi_{L^C} \mathcal{F}g_\epsilon}_{L^2(\R)}^2
			\\
			&=&
			\norm{g_\epsilon}^2_{L^2(I)} - \norm{\mathcal{F}^{-1} \chi_{L} \mathcal{F} g_\epsilon}_{L^2(\R)}^2 
			\\
			&\leq&
			(1-\delta^2) \norm{g_\epsilon}^2_{L^2(I)}.
		\end{eqnarray*}
		Combining these two inequalities, we find that $\epsilon \geq \delta$. 
		This leads to a contradiction since we may choose $\epsilon = \delta/2$.

We now invoke Panejah's theorem which says that the lower norm bound of $\chi_L \mathcal{F}$ on $L^2(\R)$ is equivalent to the condition that
 there exists a $\delta >0$ such that
	\begin{equation*}
		\inf_{\xi \in \R} \abs{L \cap (\xi-\delta, \xi)} > 0.
	\end{equation*}
	Finally, this is equivalent to
	\begin{equation*}
		\liminf_{\xi \rightarrow \infty} \frac{\pi_K(\e^{\xi-\delta},\e^{\xi})}{\e^\xi} > 0,
	\end{equation*}
	which is exactly condition $(c)$. Indeed, this
	 is just a matter of observing that
	\begin{equation*}
		  \frac{ \pi_K(\e^{\xi-\delta}, \e^{\xi})}{\e^\xi}
\leq \sum_{\log k \in (\xi-\delta,\xi)} \frac{1}{k} \leq   \e^{ \delta} \frac{ \pi_K(\e^{\xi-\delta}, \e^{\xi})}{\e^\xi}.
	\end{equation*}

$(a) \iff (b)$:
This equivalence follows essentially from the result from Lemma \ref{second stability theorem} and the identity 
$\mathcal{Z} =  \mathcal{F}^{-1} \chi_L \mathcal{F}  + \Phi$,
where $\Phi$ is a compact operator on $L^2(I)$ and $L$ is given by \eqref{definisjon av L}. What needs to be checked is that the lower bound of $\mathcal{Z}$ implies the injectivity of $\chi_I \mathcal{F}^{-1}\chi_L \mathcal{F}$, and vice versa.

By the equivalence of $(b)$ and $(c)$, which we just established, we know that if the operator $\mathcal{F}^{-1}\chi_L \mathcal{F}$ is bounded below, then there exists $\delta \in (0,1)$ such that
$\inf_{x \in \R} ( \pi_K(x) - \pi_K(\delta x))/x > 0$.
This is readily seen to imply that $\sum_{n \in K} n^{-1} = \infty$. We show that this is sufficient for the operator
$\mathcal{Z}$ to be injective. Indeed, define the operator 
\begin{equation*}
	 R : g \longmapsto \left( \ldots, \; \frac{\hat{g}(-\log 3)}{\sqrt{3}}, \;  \frac{\hat{g}(-\log 2)}{\sqrt{2}}, \; \hat{g}(0), \; \hat{g}(0), \;  \frac{\hat{g}(\log 2)}{\sqrt{2}}, \; \frac{\hat{g}(\log 3)}{\sqrt{3}}, \; \ldots \right).
\end{equation*}
By an easy computation we have $\mathcal{Z} = (2\pi)^{-1} R^\ast R$. Since an operator is always injective on the image of its adjoint it suffices to check that the hypothesis implies that $R$ is injective, i.e. that for $g \in L^2(I)$ then $\hat{g}(\pm \log n) = 0$ for all $n \in K$ implies $g = 0$.
To get a contradiction, assume that the function $f$ is non-zero. The function $\hat{g}$ is entire and of exponential type $\abs{I}/2$. In particular it is bounded on $\R$ and is therefore of the Cartwright class. A basic property of functions in this class (see \cite[lesson 17]{levin96}) is that the number of zeroes  with modulus less than $r>0$, which we denote by $\lambda(r)$, has to satisfy
\begin{equation*}
	\lim_{r \rightarrow \infty} \frac{\lambda(r)}{r} = \frac{\abs{I}}{\pi}.
\end{equation*}
	Let $\pi_K(x)$ be the counting function for $K$. Then $\lambda(r) \geq \pi_K(\e^r)$. The existence of the limit implies that $\pi_K(n) \leq C\log n$ for some $C>0$. Summing by parts and using this estimate, we see that
	\begin{equation} \label{in proof of z injectivity}
		\sum_{n \in K}^N \frac{1}{n} = \frac{\pi_K(N)}{N} + \sum_{n=1}^{N-1} \frac{\pi_K(n)}{n(n+1)}
		\leq 1 + C \sum_{n=1}^N \frac{\log n}{(n+1)^2},
	\end{equation} 
	which converges as $N \rightarrow + \infty$. Hence, we have a contradiction and so $g$ has to equal zero, as was to be shown.
We can now apply Lemma \ref{second stability theorem} to conclude that $\mathcal{Z}$ is bounded below on all of $L^2(I)$.

The same argument holds if we reverse the roles of $\mathcal{Z}$ and $\chi_I \mathcal{F}^{-1}\chi_L \mathcal{F}$ since the latter operator is injective whenever $K$ is non-empty. Indeed,
	assume that $K\neq \emptyset$ and let $g \in L^2(I)$ be such that $g \neq 0$. 
	It is clear that neither $\chi_L \mathcal{F} g$ nor $\mathcal{F}^{-1} \chi_L \mathcal{F} g$ can be equal to zero almost everywhere as functions in $L^2(\R)$. 
To conclude, we use the Plancherel-Parseval formula. For suppose that $\chi_I \mathcal{F}^{-1} \chi_L \mathcal{F}g = 0$. Since 
$g=\chi_I \mathcal{F}^{-1} \chi_{L^C} \mathcal{F}g + \chi_I \mathcal{F}^{-1}\chi_L \mathcal{F}g$,
this implies $\chi_{I} \mathcal{F}^{-1} \chi_{L^C} \mathcal{F}g = g$. And so 
	\begin{eqnarray*}
		\norm{g}_{L^2(I)}^2 
		&=& 
		\norm{\mathcal{F}^{-1} \chi_L \mathcal{F}g}_{L^2(\R)}^2
		+  \norm{  \mathcal{F}^{-1} \chi_{L^C} \mathcal{F}g}_{L^2(\R)}^2 \\		
		&\geq& 
		\norm{ \mathcal{F}^{-1}\chi_L \mathcal{F}g}_{L^2(\R)}^2
		+  \norm{ \chi_I \mathcal{F}^{-1}  \chi_{L^C} \mathcal{F}g}_{L^2(I)}^2 \\		
		&=& \norm{\mathcal{F}^{-1} \chi_L \mathcal{F}g}_{L^2(\R)}^2
		+ \norm{g}_{L^2(\R)}^2.
	\end{eqnarray*}
But from what is already established $\norm{ \mathcal{F}^{-1} \chi_L \mathcal{F}g}_{L^2(\R)} > 0$, which leads to a contradiction. This concludes the proof of the theorem.
\end{proof}

\section{Characterisation of $\mathcal{Z}_{K,I}$ which behave like the identity operator}  \label{section: korevaar}

The following result describes when $\mathcal{Z}_{K,I}$ is a compact perturbation of a scalar multiple of the identity operator. 
\begin{theorem} \label{identity theorem}
	Suppose $K \subset \N$ and $A \geq 0$. Then for all bounded and symmetric intervals $I \subset \R$, the operator defined by
	\begin{equation*}
		\Psi_{K,I} = \mathcal{Z}_{K,I} - A\mathrm{Id}
	\end{equation*}
	is compact 
	if and only if
	\begin{equation} \label{asymptotic density}
	 	\lim_{x \rightarrow \infty} \frac{\pi_K(x)}{x} =  A.
	\end{equation}
\end{theorem}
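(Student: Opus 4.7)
The plan is to combine Theorem \ref{formula theorem} with the Ikehara--Korevaar theorem (to be stated in this section). Since Theorem \ref{formula theorem} gives $\mathcal{Z}_{K,I} = \chi_I\mathcal{F}^{-1}\chi_L\mathcal{F} + \Phi_{K,I}$ with $\Phi_{K,I}$ compact, compactness of $\Psi_{K,I} = \mathcal{Z}_{K,I} - A\mathrm{Id}$ is equivalent to compactness of
\begin{equation*}
 T_A := \chi_I\mathcal{F}^{-1}\chi_L\mathcal{F} - A\mathrm{Id} = \chi_I\mathcal{F}^{-1}(\chi_L - A)\mathcal{F},
\end{equation*}
the second identity being understood distributionally and using that $A\mathrm{Id}$ acts on $L^2(I)$ as $A\chi_I\mathcal{F}^{-1}\mathcal{F}$. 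So I reduce to characterising when $T_A$ is compact on $L^2(I)$ for every bounded symmetric interval $I$.

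Next I would realise $T_A$ as an integral operator with a translation-invariant kernel. Writing $h_A = \frac{1}{\sqrt{2\pi}}\,\mathcal{F}^{-1}(\chi_L - A)$ as a tempered distribution, one has $T_A g = \chi_I\cdot (g\ast h_A)$ on $L^2(I)$. Equivalently, going back to the defining expression for $\mathcal{Z}_{K,I}$ and using that $\Re A/(\delta+it) = A\delta/(\delta^2+t^2)$ is (up to $\pi$) a Poisson kernel, $h_A$ is identified with the boundary distribution
\begin{equation*}
 h_A(t) = \lim_{\delta\to 0^+}\frac{1}{\pi}\,\Re\!\left[\zeta_K(1+\delta+\im t) - \frac{A}{\delta+\im t}\right].
\end{equation*}
This identification is the bridge to Dirichlet-series methods, and it also makes the two pictures (frequency-side truncation by $\chi_L$ versus Dirichlet series at the edge of convergence) into the same object.

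Now I would prove that $T_A$ is compact on $L^2(I)$ for every bounded symmetric $I$ if and only if the distribution $h_A$ is locally represented by a continuous function on $\R$ (a local pseudofunction). The easy direction is Lemma \ref{convolution lemma}: if $h_A$ is continuous on $[-2T,2T]$, then the integral operator with kernel $h_A(t-\tau)$ is compact on $L^2(I)$ for $I=(-T,T)$. For the converse, I would exploit translation invariance of the kernel, writing the compact operator $T_A$ as a norm limit of finite-rank operators and using that the off-diagonal structure of the kernel is forced by the convolution form, to extract continuity of $h_A$ on each $(-2T,2T)$. This step is the main technical obstacle, since distributional kernels can occasionally produce compact operators on bounded intervals without being locally continuous; it is precisely the requirement that compactness hold \emph{for every} bounded symmetric $I$ that rules this out and forces local continuity.

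Finally, I would invoke the Ikehara--Korevaar theorem stated in this section: since $\pi_K$ is non-decreasing, the boundary distribution of $\zeta_K(s) - A/(s-1)$ on $\Re s = 1$ is a local pseudofunction if and only if $\pi_K(x)/x \to A$. Taking real parts preserves this pseudofunction property, so local continuity of $h_A$ is equivalent to condition \eqref{asymptotic density}. Chaining the equivalences from the three steps above yields the theorem.
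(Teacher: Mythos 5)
Your sufficiency direction is sound and matches the paper's strategy: use Theorem \ref{formula theorem} to reduce to the multiplier operator $\chi_I\mathcal{F}^{-1}(\chi_L-A)\mathcal{F}$, relate it to the boundary behaviour of $\zeta_K(s)-A/(s-1)$, and invoke the Ikehara--Korevaar theorem together with Lemma \ref{convolution lemma}. (One small imprecision: $h_A = \tfrac{1}{\sqrt{2\pi}}\mathcal{F}^{-1}(\chi_L-A)$ and the Poisson boundary distribution of $\Re[\zeta_K(1+\delta+\im t)-A/(\delta+\im t)]$ are \emph{not} the same object; the latter has a pure-point spectral measure $\sum_{n\in K}n^{-1}(\delta_{\log n}+\delta_{-\log n})$, while the former has the interval-smeared measure $\chi_L$. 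Theorem \ref{formula theorem} says they differ by a compact operator, so the discrepancy is harmless, but it should not be stated as an equality.)

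The converse direction, however, has a genuine gap. You claim that if $T_A$ is compact on $L^2(I)$ for every bounded symmetric $I$, then $h_A$ must be ``locally represented by a continuous function,'' and you propose to ``exploit translation invariance of the kernel $\dots$ to extract continuity of $h_A$''. This is not a proof, and as a target it is too strong in two ways. First, ``local pseudofunction'' is not the same as ``locally continuous'': a pseudofunction need not be continuous, and the paper's definition (distributional Fourier transform of a decaying $L^\infty$ function) is strictly weaker. Second, and more important, compactness of the truncated convolution on every bounded interval does \emph{not} force the kernel to be locally continuous: any kernel in $L^1_{\mathrm{loc}}$ gives a Hilbert--Schmidt, hence compact, operator on every $L^2(I)$ (this is exactly the point of the paper's remark about $\chi_Y$ with $Y$ unbounded of finite measure). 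So the ``forced by the convolution form'' argument cannot succeed as written.

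What the paper does instead is concrete and avoids the pseudofunction characterisation entirely. Starting from compactness of $(*) = \chi_I\mathcal{F}^{-1}\chi_L\mathcal{F}-A\,\mathrm{Id}$, it uses the weak-to-norm continuity of compact operators: for each fixed $\delta>0$, the normalised sinc-type functions
\begin{equation*}
	g_\xi(t)=\chi_{(-T,T)}\mathcal{F}^{-1}\{\chi_{(\xi-\delta,\xi)}\}(t)
\end{equation*}
tend weakly to zero in $L^2(I)$ as $|\xi|\to\infty$, and the dual pairing $\langle (*) g_\xi, g_\xi\rangle$ is, up to a controllable error, $\tfrac{1}{\delta}|L\cap(\xi-\delta,\xi)|-A$. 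Compactness therefore forces
\begin{equation*}
	\frac{|L\cap(\xi-\delta,\xi)|}{\delta} - A \longrightarrow 0 \quad (\xi\to\infty),
\end{equation*}
for every $\delta>0$; this is the averaged-decay (Korevaar) condition, which is what actually holds, rather than local continuity of $h_A$. The last step is an elementary comparison/summation-by-parts argument showing this averaged condition on $L$ is equivalent to $\pi_K(x)/x\to A$. Your outline is missing exactly this test-function computation, which is the substance of the ``only if'' direction.
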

As mentioned in the introduction, this theorem should be compared to the following tauberian result due to S.~Ikehara \cite{ikehara31} and J.~Korevaar \cite{korevaar05}. Indeed, the sufficiency of the condition \eqref{asymptotic density} in Theorem \ref{identity theorem} follows directly from it.
Note that we call the distributional Fourier transform of $L^\infty$ functions which decay to zero at infinity pseudo-functions. These are in general distributions.
\begin{thm}[Ikehara 1931, Korevaar 2005] 
	Let $f(t)$ be a non-decreasing function with support in $(0,\infty)$, and suppose that
	the Laplace transform
	\begin{equation*}
		F(s) = \mathcal{L}f(s) = \int_0^\infty \frac{S(u)}{\e^u} \e^{-(s-1)u} \dif u
	\end{equation*}
	exists for $\sigma > 1$. For some constant $A$, let 
	\begin{equation*}
		g(s) = F(s) - \frac{A}{s-1}.
	\end{equation*}
	If $g(s)$ coincides with a pseudo-function on every bounded interval on the abscissa   $\sigma=1$ then
	\begin{equation*}
	 	\lim_{t\rightarrow \infty} \frac{S(u)}{\e^u} = A.
	\end{equation*}
	Conversely, if this limit holds, then $g$ extends to a pseudo-function on $\sigma=1$.
\end{thm}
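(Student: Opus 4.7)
The plan is to use the Ikehara--Korevaar theorem stated above as the bridge between the density condition \eqref{asymptotic density} and an operator-theoretic property of $\Psi_{K,I}$. Set $\psi(s) = \zeta_K(s) - A/(s-1)$, so that by Ikehara--Korevaar the condition $\lim_{x\to\infty}\pi_K(x)/x = A$ is equivalent to $\psi$ extending to a pseudo-function on the abscissa $\sigma=1$. Thus it suffices to show that $\Psi_{K,I}$ is compact for every bounded symmetric interval $I$ if and only if $\psi$ so extends. The link is that, using $\Re((s-1)^{-1})_{s=1+\delta+\im t} = \delta/(\delta^2+t^2) \to \pi\delta_0$ distributionally as $\delta\to 0^+$, the operator $\Psi_{K,I} = \mathcal{Z}_{K,I} - A\mathrm{Id}$ is the convolution operator on $L^2(I)$ whose distributional kernel equals $\pi^{-1}\Re\psi(1+\im(t-\tau))$. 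Equivalently, by Theorem \ref{formula theorem}, $\Psi_{K,I}$ is a compact perturbation of the truncated Fourier-multiplier operator $\chi_I\mathcal{F}^{-1}(\chi_L - A)\mathcal{F}$ on $L^2(I)$.

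For sufficiency, assume $\psi$ extends as a pseudo-function on $\sigma=1$, so that $\Re\psi(1+\im t)$ is the distributional Fourier transform of some $k\in L^\infty(\R)$ with $k(\xi)\to 0$ at infinity. Then, modulo a compact operator, $\Psi_{K,I}$ acts as $g\mapsto \chi_I\mathcal{F}^{-1}(k\mathcal{F}g)$. I would approximate $k$ by its truncation $k_R(\xi) = k(\xi)\chi_{[-R,R]}(\xi)$: the operator with symbol $k_R$ has integral kernel
\begin{equation*}
  \frac{1}{2\pi}\int_{-R}^{R}k(\xi)\e^{\im\xi(t-\tau)}\dif\xi,
\end{equation*}
a continuous function on the bounded set $I\times I$, hence Hilbert--Schmidt and compact. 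The tail operator has norm at most $\sup_{\abs{\xi}>R}\abs{k(\xi)}$, which tends to zero as $R\to\infty$, so $\Psi_{K,I}$ is a norm limit of compact operators.

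For necessity, suppose $\Psi_{K,I}$ is compact for some bounded symmetric $I$. Fix $\phi\in\mathcal{C}^\infty_0(I)$ with $\int\abs{\hat\phi(\xi)}^2\dif\xi = 1$ and set $g_x(t) = \e^{\im xt}\phi(t)\in L^2(I)$. By the Riemann--Lebesgue lemma applied to the pairing with any fixed $h\in L^2(I)$, $g_x$ converges weakly to zero in $L^2(I)$ as $\abs{x}\to\infty$. Compactness of $\Psi_{K,I}$ forces $\Psi_{K,I}g_x\to 0$ in norm and hence $\inner{\Psi_{K,I}g_x}{g_x}\to 0$. Writing $\Psi_{K,I} = \chi_I\mathcal{F}^{-1}(\chi_L - A)\mathcal{F} + \Phi_{K,I}$ via Theorem \ref{formula theorem} and applying Plancherel yields
\begin{equation*}
  \inner{\Psi_{K,I}g_x}{g_x} = \int_\R(\chi_L(\xi)-A)\abs{\hat\phi(\xi-x)}^2\dif\xi + \littleoh(1),
\end{equation*}
where the $\littleoh(1)$ comes from $\Phi_{K,I}$ being compact. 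Choosing families of $\phi$ so that $\abs{\hat\phi}^2$ approximates characteristic functions of intervals $(0,h)$ for various $h>0$, one concludes $\abs{L\cap(x,x+h)}/h\to A$ as $\abs{x}\to\infty$ for every fixed $h>0$. Via the definition \eqref{definisjon av L} of $L$, the estimate $\log(1+1/n)\sim 1/n$, and a summation-by-parts argument relating $\sum_{n\in K,n<N}1/n$ to $\pi_K(N)/N$, this local density property translates to $\pi_K(x)/x\to A$.

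The main obstacle is the necessity direction: the test-function calculation only yields vanishing \emph{smoothed} averages of $\chi_L - A$ at infinity, with smoothing window dictated by $\phi$, whereas the conclusion is a sharp pointwise statement about $\pi_K(x)/x$. Reconstructing the pointwise limit from a family of smoothed densities requires either careful comparison across choices of $\phi$, or---more elegantly---showing that vanishing of smoothed averages is itself equivalent to $\psi$ extending as a pseudo-function, so that this direction also reduces to the converse half of Ikehara--Korevaar. I expect the latter route, bypassing direct density manipulation, to be the cleanest.
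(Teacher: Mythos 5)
There is a genuine gap, and it is structural: your proposal does not prove the statement at hand. The statement is the Ikehara--Korevaar tauberian theorem itself (for a general non-decreasing $S$ with Laplace transform $F$), whereas your argument \emph{assumes} that theorem in its first sentence and then uses it, together with Theorem \ref{formula theorem} and a weak-convergence/compactness argument, to establish the equivalence between compactness of $\Psi_{K,I}$ and the density condition \eqref{asymptotic density}. That is a (reasonable sketch of a) proof of Theorem \ref{identity theorem}, not of the quoted theorem; as a proof of the quoted theorem it is circular, and your closing suggestion to handle the necessity direction by ``reducing to the converse half of Ikehara--Korevaar'' invokes the very statement to be proven a second time. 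Note also that the paper itself offers no proof of this theorem: it is quoted from Ikehara \cite{ikehara31} and Korevaar \cite{korevaar05}, so there is nothing operator-theoretic to reprove here, and no route to it through $\mathcal{Z}_{K,I}$, which concerns only the special case $S(u)=\pi_K(\e^u)$.

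What an actual proof requires is a tauberian argument. The converse (easy) direction follows from the representation, valid for $\sigma=1+\delta$,
\begin{equation*}
	g(1+\delta+\im t)=\int_0^\infty \e^{-\delta u}\,\e^{-\im t u}\Big(\frac{S(u)}{\e^u}-A\Big)\dif u ,
\end{equation*}
so that if $S(u)\e^{-u}\to A$ then $g$ on $\sigma=1$ is the distributional Fourier transform of a bounded function vanishing at infinity, i.e.\ a pseudo-function (the paper checks exactly this in passing, for $S=\pi_K(\e^u)$, inside the proof of Theorem \ref{identity theorem}). The forward (hard) direction is the genuine tauberian content: one must show that the pseudo-function boundary behaviour of $g$ forces the pointwise limit of $S(u)\e^{-u}$, and this is where the monotonicity of $S$ is indispensable --- the standard arguments (Ikehara's original one, or Korevaar's distributional refinement) smooth against Fej\'er-type kernels of bounded frequency support, use the Riemann--Lebesgue property of pseudo-functions to identify the limit of the smoothed averages, and then upgrade smoothed convergence to pointwise convergence via the one-sided control that monotonicity provides. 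Your sketch contains none of this; the weak-null family $g_x(t)=\e^{\im xt}\phi(t)$ and the symbol calculus for $\chi_L-A$ have no counterpart in the general setting of the theorem, where there is no set $L$ and no operator to be compact.
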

We remark that it follows from the Ikehara-Korevaar theorem that $g$ extends to a pseudo-function on $\sigma=1$ if and only if $\e^{-u}{S(u)}$ tends to $A$.

The significance of pseudo-functions is that they are the class of distribtions which satisfy, by definition, the Riemann-Lebesgue lemma. In particular, this implies that the convolution-type operators they give rise to are compact operators. To make this more precise, we give the following lemma.
\begin{lemma} \label{convolution lemma} 
	Let $I \subset \R$ be a bounded and symmetric interval and $k \in L^1(2I)$. Then the operator defined by
	\begin{equation*} 
	 \Lambda : g \in L^2(I) \longmapsto   \chi_I \int_I g(\tau) k(t - \tau) \dif \tau \in L^2(I),
\end{equation*}
	is a compact operator on $L^2(I)$. 
	More generally, 
	if $\seq{k_{\delta}}_{\delta \in (0,1)}$ is a net of functions in $L^1_{\loc}(\R)$ converging in the sense of distributions to a pseudo-function $k$, then the operator
	\begin{equation*}
		\widetilde{\Lambda} : g \in L^2(I) \longmapsto  \lim_{\delta \rightarrow 0} 
		\chi_I \int_I g(\tau) k_\delta (t-\tau) \dif \tau
	\end{equation*}
	is bounded and compact on $L^2(I)$.
\end{lemma}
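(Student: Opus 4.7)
The plan for the first part is a standard Hilbert--Schmidt approximation argument. Young's convolution inequality, applied after extending $k$ by zero outside $2I$, shows that $\Lambda$ is bounded on $L^2(I)$ with $\|\Lambda\|_{\mathrm{op}} \leq \|k\|_{L^1(2I)}$. Choosing continuous approximants $k_n \in \mathcal{C}^\infty_0(2I)$ with $\|k - k_n\|_{L^1(2I)} \to 0$, the associated operators $\Lambda_n$ have kernels $\chi_I(t) k_n(t-\tau)\chi_I(\tau)$ that are bounded on the compact square $I \times I$, hence square-integrable; each $\Lambda_n$ is therefore Hilbert--Schmidt and compact. The same Young inequality gives $\|\Lambda - \Lambda_n\|_{\mathrm{op}} \leq \|k - k_n\|_{L^1(2I)} \to 0$, and $\Lambda$ is compact as a norm limit of compact operators.

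For the second part, I would invoke the Fourier-analytic characterization of a pseudo-function: there exists $h \in L^\infty(\R)$ with $\operatorname{ess\,sup}_{|\xi|>N}|h(\xi)| \to 0$ such that $\hat k = h$ in the sense of tempered distributions. Using the convolution theorem with the paper's normalization $\widehat{f \ast g} = \sqrt{2\pi}\,\hat f \hat g$, the natural candidate for the limit operator is $\widetilde\Lambda g := \chi_I \mathcal{F}^{-1}\bigl(\sqrt{2\pi}\, h\,\hat g\bigr)$. Since $h \in L^\infty$ and $\hat g \in L^2$, this is bounded on $L^2(I)$ with $\|\widetilde\Lambda\|_{\mathrm{op}} \leq \sqrt{2\pi}\,\|h\|_\infty$.

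To establish compactness, I truncate the multiplier by setting $h_N := h \cdot \chi_{[-N,N]}$. By the essential decay of $h$ at infinity, $\|h - h_N\|_\infty \to 0$, so the operators $\widetilde\Lambda_N$ obtained by substituting $h_N$ for $h$ converge to $\widetilde\Lambda$ in operator norm. Each $h_N$ lies in $L^\infty \cap L^1(\R)$ with compact support, so $\tilde k_N := \mathcal{F}^{-1} h_N$ is a bounded continuous function on $\R$; in particular $\tilde k_N \in L^1(2I)$. A direct application of the convolution theorem identifies $\widetilde\Lambda_N$ with the operator of the first part associated to the kernel $\tilde k_N$, so part (1) yields its compactness, and hence $\widetilde\Lambda$ is compact as a norm limit of compacts.

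The main technical obstacle I anticipate is reconciling this Fourier-side construction of $\widetilde\Lambda$ with the limit-of-convolutions definition stated in the lemma. Distributional convergence $k_\delta \to k$ yields, for each $g \in \mathcal{C}^\infty_0(I)$ and each $t \in \R$, the pointwise convergence $(k_\delta \ast g)(t) \to (k \ast g)(t) = \mathcal{F}^{-1}(\sqrt{2\pi}\, h\, \hat g)(t)$, since $\tau \mapsto g(t-\tau)$ is itself a test function. Promoting this pointwise convergence to convergence in $L^2(I)$ and then extending by density to all of $L^2(I)$ will require a uniform bound on $\|\chi_I(k_\delta \ast \cdot)\|_{\mathrm{op}}$ along the net, which I expect to be implicit in the regularity assumptions on $\{k_\delta\}$ needed for the lemma statement to be meaningful.
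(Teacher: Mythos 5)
Your proposal is correct, and the second part of your argument tracks the paper's almost step for step: both truncate $\hat k$ to $\hat k_N = \hat k\,\chi_{[-N,N]}$, observe that $\mathcal{F}^{-1}\hat k_N$ is bounded and continuous (hence in $L^1(2I)$), reduce to the first part, and pass to the limit in operator norm using the decay of $\hat k$ at infinity.

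Where you diverge is in the first part. The paper expands $k$ in the Fourier basis $\{e_n\}$ of $L^2(2I)$ and computes that $\Lambda g = |2I|^{1/2}\sum_n c_n\,(g,e_n)\,e_n$, so that $\Lambda$ is literally a diagonal operator whose diagonal entries $c_n$ tend to zero by Riemann--Lebesgue; compactness is then immediate from the diagonal-operator characterisation. You instead use Young's inequality to bound $\Lambda$ by $\|k\|_{L^1}$, approximate $k$ in $L^1$ by smooth compactly supported $k_n$, note that each $\Lambda_n$ is Hilbert--Schmidt because its kernel is bounded on $I\times I$, and pass to the operator-norm limit. Both arguments are standard and correct; the paper's diagonalisation is more structural and dovetails naturally with the multiplier picture used throughout (the $c_n$ are essentially samples of $\hat k$), while your Hilbert--Schmidt approximation is more elementary and does not rely on the periodic Fourier expansion.

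The technical obstacle you flag at the end---that the pointwise limit $(k_\delta * g)(t)\to(k*g)(t)$ for test functions $g$ needs to be promoted to $L^2(I)$-convergence and then extended by density---is real, but it is present in the paper's proof as well: the paper carries out the computation for $g\in\mathcal{C}^\infty_0(I)$, identifies the limit with $\int_\R \hat g(\xi)\hat k(\xi)e^{\im t\xi}\,\dif\xi$, and then invokes the dual description of the $L^2(I)$-norm together with $\hat k\in L^\infty$ to get boundedness, after which density does the rest. So the correct reading of the lemma's limit is as the continuous extension of the operator initially defined on test functions; no uniform bound on $\|\chi_I(k_\delta*\cdot)\|_{\mathrm{op}}$ over the net is actually required, and your worry, while honest, points at a non-issue once the statement is interpreted this way.
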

\begin{proof}
	Let $e_n(t)$ denote the Fourier characters of $L^2(2I)$, and let the Fourier expansion of $k$ on $L^2(2I)$ be given by
	\begin{equation*}
		k(t) = \sum_{n \in \Z} c_n e_n(t).
	\end{equation*}
	Hence, for $g \in L^2(I)$, 
	\begin{equation*} 
		\Lambda g(t) =  \abs{2I}^{1/2} \sum_{n \in \Z} c_n (g, e_n)_{L^2(I)} e_n(t).
	\end{equation*}
	By the Riemann-Lebesgue Lemma it follows that $\abs{c_n} \rightarrow 0$ as
	$\abs{n} \rightarrow \infty$ and the operator $\Lambda$ is seen to be compact.
	
	We turn to the second part of the statement.
	Let $g \in \mathcal{C}^\infty_0(I)$. Then
	\begin{align*}
			\lim_{\delta \rightarrow 0}\int_\R g(t-\tau) k_\delta (\tau) \dif \tau 
			&= \left( g(t - \cdot), k \right) \\
			&=  \int_\R  \hat{g}(\xi) \hat{k}(\xi)  \e^{\im t \xi} \dif \xi.
	\end{align*}
	By the dual expression of the $L^2(I)$ norm this is seen to be bounded by some constant times the $L^2$ norm of $g$. To see that it is
	compact,  define an operator on $\mathcal{C}^\infty_0(I)$ by 
	\begin{equation*}
		\Lambda_N g(t) =  \int_\R  \hat{g}(\xi) \hat{k}_N(\xi) \e^{\im t \xi}  \dif \xi, 
	\end{equation*}
	with $\hat{k}_N = \chi_N \hat{k}$. Since $\mathcal{F}\hat{k}_N \in L^1(2I)$ this is a compact operator by the first part of the lemma. 
	Moreover,
	\begin{equation*}
		 \norm{\widetilde{\Lambda} g - \Lambda_N g}_{L^2} \leq \norm{g}_{L^2}
		 \norm{\hat{k}}_{L^\infty(\abs{\xi} > N)}.
	\end{equation*}
 	Hence the sequence of compact operators $\Lambda_N$ approximates $\widetilde{\Lambda}$ in the uniform operator topology as $N \rightarrow \infty$.
\end{proof}

\begin{proof}[Proof of Theorem \ref{identity theorem}]
We use Lemma \ref{convolution lemma} to check the sufficiency of the condition \eqref{asymptotic density}.
Recall that $\pi_K(x)$ is the counting function of the integers $K$.
If we set $S(u) = \pi_K(\e^u)$ then 
Korevaar's result says that the density condition \eqref{asymptotic density}
implies that the function 
\begin{equation} \label{korevaar asymptotic formula} 
	  \psi_K(s) = \frac{1}{s} \zeta_K(s) - \frac{A}{s-1},
\end{equation}
extends to a pseudo-function on $\sigma=1$. In fact, it is straight-forward to check this implication directly since
\begin{equation*}
	\frac{1}{s} \zeta_K(s)	 - \frac{A}{s-1}  =   \int_0^\infty \e^{-\im t u} \e^{-(\sigma-1)u} \left(\frac{\pi_K(\e^u)}{\e^u} - A\right) \dif u.
\end{equation*}
In any case, by (\ref{korevaar asymptotic formula}) it follows that
\begin{equation*} 
	 \Re \zeta_K ( 1+ \delta + \im t ) = \frac{A \delta}{\delta^2 + t^2} + \Re \phi_K (1 + \delta + \im t),
\end{equation*}
where $\phi_K = \nu(s) \psi_K(s)$, with $\nu(s)$ being a smooth function with fast decay such that $\nu(s) = s$ in the strip $t \in (-2,2)$. This ensures that $\Re \phi_K$ extends to a pseudo-function on $\sigma = 1/2$.
Since convolution operators with pseudo-functions as kernels give compact operators, the sufficiency now follows.

		The converse is more delicate since it is possible for a convolution operator to be compact with a kernel that is not a pseudo-function. For instance, the indicator function $\chi_Y$, where $Y\subset \R$ is unbounded but has finite Lebesgue measure, gives rise to such an operator.
By Theorem \ref{formula theorem} we have the identity
\begin{equation*}
	 \mathcal{Z}_{K,I} - A \mathrm{Id} = \underbrace{\chi_I \mathcal{F}^{-1} \chi_L \mathcal{F} - A \mathrm{Id}}_{(*)} + \Phi_{K,I},
\end{equation*}
for some compact operator $\Phi_{K,I}$. Since the identity operator on $L^2(I)$ can be expressed as $\mathrm{Id} = \chi_I \mathcal{F} \mathcal{F}^{-1}$, it follows from the hypothesis that 
\begin{equation*}
	\sqrt{2\pi}\; \cdot (*) = \chi_I \int_\R (\chi_L - A)\hat{g}(\xi) \e^{\im \xi t} \dif \xi
\end{equation*}
defines a compact operator on $L^2(I)$ for all bounded and symmetric $I \subset \R$. We denote it by $\widetilde{\Psi}$.
It is known that compact operators map sequences that converge weakly to zero to sequences that converge to zero in norm. We use this to show that for all $\delta > 0$,
\begin{equation} \label{modzeta: newproof condition}
	\frac{\abs{L \cap (\xi - \delta, \xi)}}{\delta} - A \rightarrow 0, \quad \text{as} \; \xi \rightarrow \infty.
\end{equation}

Next, let $\epsilon > 0$, write $I = (-T,T)$, for some $T>0$, and for $\xi \in \R$ define the $L^2(-T,T)$ functions 
\begin{equation*}
	 g_\xi(t) = \chi_{(-T,T)} \mathcal{F}^{-1} \{\chi_{(\xi-\delta,\xi)}\}(t) = \sqrt{\frac{2}{\pi}} \e^{\im t (\xi-\frac{\delta}{2})} \frac{\sin (\frac{\delta}{2} t)}{t}.
\end{equation*}
It is clear that for $T>0$ large enough, the real valued functions $\hat{g}_\xi$ approximate the characteristic functions $\chi_{(\xi-\delta,\xi)}$ to an arbitrary degree of accuracy in $L^2(\R)$. This approximation is uniform in $\xi$. In particular, we may choose $T>0$ so that 
\begin{equation*}
	 \frac{1}{2} \delta \leq \norm{g_{\xi}}_{L^2(I)} \leq 2 \delta.
\end{equation*}
Fix some sequence $\abs{\xi_n} \rightarrow \infty$. It follows readily that  the functions $g_{\xi_n}$ converge weakly to zero in $L^2(I)$,
whence $\norm{\Psi g_{\xi_n}} \rightarrow 0$ as $n \rightarrow \infty$. To obtain the connection to the set $L$, we use the dual expression for the norm of $\widetilde{\Psi} g_{\xi_n}$. 
\begin{multline*}
	 \norm{\widetilde{\Psi} g_{\xi_n} }_{L^2(I)} \geq
	 \frac{1}{\norm{g_n}_{L^2(I)}} \Abs{ \int_\R (\chi_L - A) \hat{g}_{\xi_n}(\xi)^2 \dif \xi } \\
	 \geq
	 \underbrace{\frac{1}{2\delta} \Abs{ \int_\R (\chi_L - A) \chi_{(\xi_n-\delta,\xi_n)}(\xi) \dif \xi } }_{(**)}
	 \\  - \underbrace{\frac{1}{2\delta} \Abs{ \int_\R (\chi_L - A) \Big( \hat{g}_{\xi_n}(\xi)^2 - \chi_{(\xi_n-\delta,\xi_n)}(\xi) \Big) \dif \xi }}_{(***)}.
\end{multline*}
It is clear that
\begin{equation*}
	 (**) = \frac{1}{2} \Abs{\frac{\abs{L \cap (\xi_n-\delta, \xi_n)}}{\delta} - A}.
\end{equation*}
Since $\abs{\chi_L - A }\leq 1$ and $\chi_{(\xi_n-\delta,\xi_n)} = \chi_{(\xi_n-\delta,\xi_n)}^2$, we  use the formula $(a^2- b^2) = (a+b)(a-b)$ and the Cauchy-Schwarz inequality to find
\begin{align*}
	 (***) &\leq \frac{1}{2\delta}\norm{\hat{g}_{\xi_n} + \chi_{(\xi_n-\delta,\xi_n)}}_{L^2(I)} \norm{\hat{g}_{\xi_n} - \chi_{(\xi_n-\delta,\xi_n)}}_{L^2(I)} \\ &\leq \frac{3}{2}\norm{\hat{g}_{\xi_n} - \chi_{(\xi_n-\delta,\xi_n)}}_{L^2(I)}.
\end{align*}
By choosing $T>0$ large enough, we have $(***) \leq \epsilon/6$. Hence,
\begin{equation*}
	\Abs{\frac{\abs{L \cap (\xi_n-\delta, \xi_n)}}{\delta} - A}  \leq 2\norm{\widetilde{\Psi} g_{\xi_n}}_{L^2(I)} +  \frac{\epsilon}{2}.
\end{equation*}
Since $\norm{\widetilde{\Psi} g_{\xi_n}}_{L^2(I)} < \epsilon/4$ for large enough $n$, this establishes \eqref{modzeta: newproof condition}.

	To get a contradiction, we 
	assume that $\pi_K(x)/x$ does not tend to the limit $A$. Without loss of generality, we assume that there exists a number $\kappa >0$ such that
	\begin{equation*}
		\limsup_{x \rightarrow \infty} \frac{\pi_K(x)}{x} = A + \kappa.
	\end{equation*}
	This means that for any number $\eta \in (0,1)$ we may find  a strictly increasing sequence of positive numbers $\xi_n$, with arbitrarily large separation, such that $\xi_n \rightarrow \infty$ as $n \rightarrow \infty$ and
	\begin{equation*}
		\frac{\pi_K(\e^{\xi_n})}{\e^{\xi_n}} > A + \eta \kappa \quad \text{for} \; n \in \N.
	\end{equation*}
	Moreover, since the counting function $\pi_K$ changes slowly, there exists a number $\delta_0 > 0$ such that for $n \in \N$ and $\xi \in (\xi_n - \delta_0, \xi_n)$ we have
	\begin{equation*}
		\frac{\pi_K(\e^{\xi})}{\e^{\xi}} - A > \kappa/2.
	\end{equation*}
	Next, for $\xi_n > 2$,
	\begin{align*}
		\abs{L \cap (\xi_n - \delta_0, \xi_n)} &\gtrsim \sum_{\underset{n \in K}{n \in (\e^{\xi_n - \delta_0}, \e^{\xi_n}-1)}} \log \left(1 + \frac{1}{n}\right) \\
		&\gtrsim
		\sum_{\underset{n \in K}{n \in (\e^{\xi_n - \delta_0}, \e^{\xi_n})}} \frac{1}{n} 
		 = \int_{\e^{\xi_n - \delta_0}}^{\e^{\xi_n}} \frac{1}{x} \dif \pi_K(x) \\
		& = \frac{\pi_K(\e^{\xi_n})}{\e^{\xi_n}} - \frac{\pi_K(\e^{\xi_n - \delta_0})}{\e^{\xi_n - \delta_0}} + \int_{\e^{\xi_n - \delta_0}}^{\e^{\xi_n}} \frac{1}{x}\frac{ \pi_K(x)}{x} \dif x.
	\end{align*}
	The last line follows from partial integration, and the implicit constants are absolute. By the properties of $\xi_n$, this implies that
	\begin{equation*}
		 \abs{L \cap ( \xi_n  - \delta_0, \xi_n)} 
		 \gtrsim
		 -(1-\eta) \kappa + \left(A + \frac{\kappa}{2}\right) \delta = A \delta + \left(\eta + \frac{\delta}{2} - 1 \right)\kappa.
	\end{equation*}
	By choosing $\eta  = (4-\delta)/4$, we find that for $\xi_n > 2$,
	\begin{equation*}
		\frac{\abs{L \cap (\xi_n - \delta_0, \xi_n)} }{\delta_0} - A > \frac{\kappa}{4}. 
	\end{equation*}
	This contradicts \eqref{modzeta: newproof condition}.

\end{proof}


\section{Two results in the case when $K$ has arithmetic structure} \label{label: two arithmetic results}

Assume that $K \subset \N$ has  arithmetic structure in the sense that it is the semi-group generated by a subset $Q$ of the prime numbers, which we denote by $\Pri$, i.e. $K$ consists of the integers which are only divisible by primes in $Q$. It follows that we may write 
\begin{equation*}
	 \zeta_K(s) = \prod_{p \in Q} \left( \frac{1}{1 - p^{-s}} \right),
\end{equation*}
In other words, $\zeta_K$ admits an Euler product.
A fundamental fact is that such  $K$ always admit an asymptotic density. We give a proof of this fact, no doubt well-known to specialists, before turning to theorems \ref{arithmetic theorem} and \ref{arithmetic theorem 2}.
\begin{lemma} \label{asymptotic density lemma}
	Let $Q \subset \Pri$ generate the integers $K \subset \N$, and $J$ be the integers generated by the primes not in $Q$. Then 
	\begin{equation*} 
	 	\lim_{x \rightarrow \infty} \frac{\pi_K(x)}{x} = \lim_{\sigma \rightarrow 1^+}\frac{1}{\zeta_J(\sigma)}.
	\end{equation*}
\end{lemma}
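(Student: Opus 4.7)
My plan is to exploit the multiplicative identity
\[
\zeta(s) = \zeta_K(s)\,\zeta_J(s), \qquad \sigma > 1,
\]
which is immediate from unique factorization (every positive integer writes uniquely as $n = kj$ with $k \in K$, $j \in J$) and absolute convergence. Because $\zeta_J$ is real-valued, bounded below by $1$, and non-increasing on $(1,\infty)$, the limit $c := \lim_{\sigma \to 1^+} 1/\zeta_J(\sigma)$ exists in $[0,1]$; monotone convergence applied to the Euler product identifies it with $\prod_{p \notin Q}(1 - 1/p)$, which is strictly positive precisely when $\sum_{p \notin Q} p^{-1} < \infty$. I would first produce a uniform upper bound $\limsup_{x \to \infty} \pi_K(x)/x \leq c$ by the sieve: for finite $S \subset \Pri \setminus Q$, $K$ is contained in the set $K_S$ of integers coprime to every prime of $S$, and Legendre's inclusion--exclusion gives $\pi_{K_S}(x) = x\prod_{p \in S}(1 - 1/p) + \Bigoh{2^{\abs{S}}}$; letting $S$ exhaust $\Pri \setminus Q$ yields the claim. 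In the case $c = 0$ this already concludes the proof.

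For the case $c > 0$ I would invoke the Ikehara--Korevaar theorem. Under the hypothesis $\sum_{p \notin Q} p^{-1} < \infty$, the series $\log \zeta_J(s) = \sum_{p \notin Q}\sum_{k \geq 1} k^{-1} p^{-ks}$ converges absolutely and uniformly on $\{\sigma \geq 1\}$, so $\zeta_J$ extends there to a continuous, zero-free function with $\zeta_J(1) = 1/c$. Combined with $\zeta$ having only a simple pole at $s = 1$ of residue one on $\sigma \geq 1$, the factorization $\zeta_K = \zeta/\zeta_J$ shows that $\zeta_K$ has a simple pole at $s=1$ of residue $c$ and that $\zeta_K(s)/s - c/(s-1)$ extends continuously to the entire abscissa $\sigma = 1$. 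Such a continuous function is locally a pseudo-function by the Riemann--Lebesgue lemma, so Ikehara--Korevaar (applied with $F(s) = \zeta_K(s)/s$, $S(u) = \pi_K(\e^u)$, $A = c$) delivers $\lim_{x \to \infty} \pi_K(x)/x = c$.

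The main obstacle is the verification at $c > 0$ that $\zeta_J$ extends continuously and zero-free to the line $\sigma = 1$---this is what guarantees that subtracting the polar term of $\zeta_K$ leaves a pseudo-function and thereby opens the door to Ikehara--Korevaar. This genuinely exploits $\sum_{p \notin Q} p^{-1} < \infty$ and fails in the case $c = 0$: for instance, with $Q = \{q\}$ one has $\zeta_K(s) = (1 - q^{-s})^{-1}$, which carries a sequence of additional poles at $s = 2\pi \im k/\log q$ on $\sigma = 1$, so Ikehara--Korevaar is unavailable there and the direct sieve bound of the first paragraph is indispensable.
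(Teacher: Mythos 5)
Your first paragraph (the sieve bound $\limsup \pi_K(x)/x \le c$ via Legendre inclusion--exclusion, disposing of the case $c=0$) is sound and matches the paper, which quotes the same fact from Montgomery--Vaughan. Your overall strategy for $c>0$ --- use $\zeta=\zeta_K\zeta_J$, peel off the pole of $\zeta$, and invoke Ikehara--Korevaar --- is also the one the paper takes. But there is a genuine gap in how you verify the pseudo-function hypothesis.

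You claim that under $\sum_{p\notin Q}p^{-1}<\infty$ the function $\psi_K(s)=\zeta_K(s)/s - c/(s-1)$ \emph{extends continuously} to the abscissa $\sigma=1$, and then deduce the pseudo-function property from the local integrability of a continuous function. The continuity of $\zeta_J$ on $\sigma\ge 1$ and its boundedness away from zero are indeed correct (as you say, $\log\zeta_J$ is given by an absolutely and uniformly convergent double series there). This handles every point of the line $\sigma=1$ \emph{except} $s=1$. At $s=1$, writing
\begin{equation*}
\psi_K(s) = \frac{1}{s-1}\left(\frac{1}{s\,\zeta_J(s)} - c\right) + \frac{\psi(s)}{s\,\zeta_J(s)},
\end{equation*}
continuity of the first term at $s=1$ requires $\tfrac{1}{s\zeta_J(s)}-c = \Bigoh{\abs{s-1}}$, i.e.\ one-sided differentiability of $\zeta_J$ at $s=1$. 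That amounts to $\zeta_J'(1^+)>-\infty$, equivalently $\sum_{n\in J}n^{-1}\log n<\infty$, equivalently (by the paper's Lemma \ref{malliavin lemma}) $\sum_{p\notin Q}p^{-1}\log p<\infty$. This is a strictly stronger hypothesis than $\sum_{p\notin Q}p^{-1}<\infty$: take, say, a subset of primes of density roughly $1/\log n$ so that $p_n\asymp n(\log n)^2$; then $\sum 1/p_n<\infty$ but $\sum (\log p_n)/p_n=\infty$. So in general $\psi_K$ is \emph{not} continuous at $s=1$; in fact the paper's Theorem \ref{arithmetic theorem 2} shows it need not even be $L^1_{\loc}$ there unless $\sum_{p\notin Q}p^{-1}\log\log p<\infty$.

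The fix is exactly what the paper does: rather than proving continuity, compute the distributional boundary value directly. After reducing (as you did) to the term $\frac{1}{s-1}\bigl(\zeta_J(s)/s-\zeta_J(1)\bigr)$, one pairs against a test function and interchanges integrals to find that its boundary distribution on $\sigma=1$ is the Fourier transform of $-2\pi\,\chi_{(0,\infty)}(x)\int_x^\infty g(u)\,\dif u$ with $g(u)=\pi_J(\e^u)\e^{-u}$. The only hypothesis used is $\int_0^\infty g<\infty$, i.e.\ $\zeta_J(1)<\infty$, and the resulting function of $x$ is bounded and tends to zero at infinity, so the boundary distribution is a pseudo-function (though possibly not a function). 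This is the step your proposal skips, and it is where the real work of the lemma lives.
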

\begin{proof}
	This lemma seems to be folklore, indeed for finite $\Pri \backslash Q$ it is readily known that it holds. See for instance  \cite[theorem 3.1]{montgomery_vaughan07}. An immediate consequence is that for infinite $\Pri \backslash Q$, then
	\begin{equation*}
		\limsup_{x \rightarrow \infty} \frac{\pi_K(x)}{x} \leq \lim_{\sigma \rightarrow 1^+} \frac{1}{\zeta_J(\sigma)}.
	\end{equation*}
	In particular, if $\zeta_J(\sigma)$ diverges as $\sigma \rightarrow 1^+$, then $\pi_K(x)/x$ tends to zero. However, the remaining part of the lemma seems to be more difficult, and no analytic proof, or indication thereof, seems to be readily available in the literature. Therefore we show how one follows  from the Ikehara-Korevaar theorem above.

	Assume that $\zeta_J(1) < \infty$ and recall that $\zeta(s) = (s-1)^{-1} + \psi(s)$ for some entire function $\psi$.
	By Lemma 1, it suffices to show that the following function coincides with a pseudo-function on finite intervals along the abscissa $\sigma = 1$.
	\begin{eqnarray*}
		\frac{ \zeta_K(s)}{s} - \frac{1}{\zeta_J(1)} \frac{1}{s-1}
		&=&
		\frac{\zeta(s)}{s\zeta_J(s)}  - \frac{1}{\zeta_J(1)} \frac{1}{s-1} \\
		&=&
		\frac{1}{s-1} \left( \frac{1}{s\zeta_J(s)} - \frac{1}{\zeta_J(1)} \right) + \frac{\psi(s)}{s\zeta_J(s)}.
	\end{eqnarray*}
	Since $\zeta_J(1)<\infty$ it is not hard to use the Euler product formula to see that $\zeta_J(1 + \im t)$  is bounded above and below in absolute value for all $\R$. This means that the last term coincides with a pseudo-function on finite intervals along the abscissa $\sigma = 1$. Hence, the same is true for the left-hand side if and only if it holds true for the first term on the right-hand side. It is readily seen that this function extends to a pseudo-function on $\sigma=1$ if and only if the same is true for 
	\begin{equation} \label{its}
		 \frac{1}{s-1} \left( \frac{\zeta_J(s)}{s}  - \zeta_J(1) \right).
	\end{equation}
	We calculate its distributional Fourier transform. Let $\phi$ be a test function.
	Since we may write
	\begin{equation*}
	 	\frac{\zeta_J(s)}{s} = \frac{1}{s} \int_{1}^\infty x^{-s} \dif \pi_J(x) =\int_{0}^\infty \frac{\pi_J(\e^u)}{\e^u} \e^{-(\sigma-1) - \im t u}\dif u,
	\end{equation*}
	 it follows that
\begin{multline*}
	 \lim_{\delta \rightarrow 0} \int_\R \hat{\phi}(t) \frac{1}{\delta + \im t}\left(\frac{\zeta_J(1+\delta + \im t)}{1+\delta +\im t}  - \zeta_J(1) \right) \dif t \\
	= \lim_{\delta \rightarrow 0} \int_\R \frac{1}{\delta + \im t}\hat{\phi}(t)\int_0^\infty g(u) (\e^{-\delta u - \im u t}- 1)\dif u \dif t,
\end{multline*}
where $g(u) = \pi_J(\e^u) e^{-u}$.
Using the smoothness of $\phi$, we change the order of integration,
\begin{multline*}
	 	\lim_{\delta \rightarrow 0}\int_0^\infty g(u) \int_\R \hat{\phi}(t) \frac{\e^{-\delta u - \im u t}- 1}{\delta + \im t}\dif t \dif u
		\\=
	 	 \int_0^\infty g(u) \lim_{\delta \rightarrow 0}\int_\R \hat{\phi}(t) \frac{\e^{-\delta u - \im u t}- 1}{\delta + \im t}\dif t \dif u
		 \\=
		 \int_0^\infty g(u) \Phi(u) \dif u,
\end{multline*}
where $\Phi'(u) = - 2\pi \phi(u)$ and $\Phi(0)= 0$.
This means that
\begin{equation*}
	 \Phi(u) = -2\pi \int_0^u \phi(x) \dif x \quad \text{for} \; u \geq 0.
\end{equation*}
So,
\begin{multline*}
	 \lim_{\delta \rightarrow 0} \int_\R \hat{\phi}(t) \frac{1}{\delta + \im t}\left(\frac{\zeta_J(1+\delta + \im t)}{1+\delta +\im t}  - \zeta_J(1)\right) \dif t 
	 \\= 
	 -2\pi \int_0^\infty g(u)  \int_0^u \phi(x) \dif x \dif u
	 \\ =
	- 2 \pi \int_\R \phi(x) \chi_{(0,\infty)}(x)\int_{x}^\infty g(u) \dif u \dif x.
\end{multline*}
Since $g(u)$ is integrable, this implies that 
\begin{equation*}
	 \chi_{(0,\infty)}(x)\int_{x}^\infty g(u) \dif u
\end{equation*}
decays as $\abs{x} \rightarrow \infty$ and so the function \eqref{its} extends to a pseudo-function on the abscissa $\sigma=1$.
\end{proof}
Hence, if  $J$ denotes the integers generated by the primes not in $Q$, then the condition (\ref{asymptotic density}) always holds with $A = \lim_{\sigma \rightarrow 1^+}\zeta_J^{-1}(\sigma)$.
By the Euler product representation of $\zeta_J$
it is seen that $\zeta_J(1) < \infty$ if and only if 
	\begin{equation}
		 \sum_{p \in \Pri \backslash Q} p^{-1} < \infty.
			 \label{finite prime sum condition}
	\end{equation}
Under these conditions (\ref{panejah's condition}) is equivalent to (\ref{finite prime sum condition}). This means that we get the following simpler form of theorems \ref{formula theorem}  to \ref{identity theorem}.
\begin{theorem}  \label{arithmetic theorem}
	Let $I \subset \R$ be a bounded symmetric interval, $Q \subset \Pri$ generate the integers $K$, and $J$ be the integers generated by the primes not in $Q$. Then 
	\begin{equation} \label{tjosan}
	 	\mathcal{Z}_{K,I} = \zeta_J^{-1}(1) \mathrm{Id} + \Psi_{K,I}, 
	\end{equation}
	for a compact operator $\Psi_{K,I}$.
	Moreover, the operator $\mathcal{Z}_{K,I}$ is bounded below on $L^2(I)$ if and only if
	\begin{equation*} 
		\sum_{p \in \Pri \backslash Q} \frac{1}{p} < 	\infty.
	\end{equation*}
\end{theorem}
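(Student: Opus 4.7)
The plan is to combine Theorem \ref{identity theorem} and Theorem \ref{lower bound theorem} with the density formula supplied by Lemma \ref{asymptotic density lemma}; at that point both assertions reduce to routine bookkeeping, and no fresh analytic input is needed.

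First I would handle the decomposition \eqref{tjosan}. By Lemma \ref{asymptotic density lemma} the set $K$ admits the asymptotic density
\[
\lim_{x \to \infty} \frac{\pi_K(x)}{x} = \lim_{\sigma \to 1^+} \frac{1}{\zeta_J(\sigma)} = \zeta_J^{-1}(1),
\]
with the convention $\zeta_J^{-1}(1) = 0$ when the series defining $\zeta_J(1)$ diverges. Setting $A = \zeta_J^{-1}(1) \geq 0$, the hypothesis \eqref{asymptotic density} of Theorem \ref{identity theorem} holds, and that theorem immediately yields a compact operator $\Psi_{K,I} = \mathcal{Z}_{K,I} - A\,\mathrm{Id}$, which is \eqref{tjosan}.

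Next, for the characterization of the lower norm bound, the key fact from the Euler product representation is that
\[
\zeta_J(1) < \infty \iff \sum_{p \in \Pri \setminus Q} \frac{1}{p} < \infty.
\]
I would then argue directly from the equivalence $(a) \iff (c)$ in Theorem \ref{lower bound theorem}. If the prime sum converges, then $A = \zeta_J^{-1}(1) > 0$, and the existence of the limit $\pi_K(x)/x \to A$ gives, for every $\delta \in (0,1)$,
\[
\lim_{x \to \infty} \frac{\pi_K(x) - \pi_K(\delta x)}{x} = A(1-\delta) > 0,
\]
so condition $(c)$ holds and $\mathcal{Z}_{K,I}$ is bounded below. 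Conversely, if the prime sum diverges then $A = 0$ and $\pi_K(x)/x \to 0$; since $\pi_K$ is non-decreasing,
\[
0 \leq \frac{\pi_K(x) - \pi_K(\delta x)}{x} \leq \frac{\pi_K(x)}{x} \to 0,
\]
for every $\delta \in (0,1)$, so condition $(c)$ fails and $\mathcal{Z}_{K,I}$ is not bounded below.

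The only step that is not purely mechanical is invoking Lemma \ref{asymptotic density lemma} in the regime where $\zeta_J(1) = \infty$, but since its conclusion gives $A = 0$ directly and the upper bound $\pi_K(x)/x \leq \pi_K(x)/x \to 0$ suffices for the failure of $(c)$, no delicacy is actually required. The main (minor) obstacle is only the need to treat the degenerate case $A = 0$ uniformly with $A > 0$, which is done by adopting the convention $\zeta_J^{-1}(1) = 0$ so that the single formula \eqref{tjosan} covers both situations.
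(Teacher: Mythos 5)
Your proposal is correct and follows essentially the same route as the paper: invoke Lemma~\ref{asymptotic density lemma} for the density $A=\zeta_J^{-1}(1)$, apply Theorem~\ref{identity theorem} for the decomposition~\eqref{tjosan}, then combine Theorem~\ref{lower bound theorem} with the Euler product to characterize lower boundedness. You are in fact slightly more careful than the paper's terse ``bounded below if and only if $A>0$'' in spelling out why condition~(c) of Theorem~\ref{lower bound theorem} reduces to $A>0$ once the limit $\pi_K(x)/x\to A$ is known to exist.
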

\begin{proof}
	By Lemma \ref{asymptotic density lemma} the limit
	\begin{equation*}
		\lim_{x \rightarrow \infty}  \frac{\pi_K(x)}{x}  = A
	\end{equation*}
	always holds with $A = \zeta_J^{-1}(\sigma)$. With this, Theorem \ref{identity theorem} implies the formula for $\mathcal{Z}_{K,I}$.
	
	Finally, Theorem \ref{lower bound theorem} says that $\mathcal{Z}_{K,I}$ is bounded below if and only if $A>0$. By considering the Euler product of $\zeta_J(s)$ it follows that $\zeta_J(1) < +\infty$ is exactly the condition of the theorem.
\end{proof}



Since (\ref{finite prime sum condition}) is equivalent to (\ref{korevaar asymptotic formula}) with $A >0$, the formula \eqref{tjosan} may be seen as a direct consequence of the Ikehara-Korevaar theorem.
However, more can be said in relation to the formula (\ref{korevaar asymptotic formula}).  Note that $f \in L^p_\loc$ if $f \in L^p(E)$ for any compact $E \subset \R$.
\begin{theorem} \label{arithmetic theorem 2}
	Let $Q \subset \Pri$ generate the integers $K$, and $J$ be the integers generated by the primes not in $Q$, and 
	assume that \eqref{finite prime sum condition} holds.
	Then 
	\begin{equation*}
	 	\psi_K(s) := \frac{1}{s} \zeta_K(s) - \frac{\zeta_J^{-1}(1)}{s-1}
	\end{equation*}
	extends, in the sense of distributions, to a function
	 in $L^1_{\mathrm{loc}}$ on the abscissa $\sigma=1$ if and only if
	\begin{equation*}
		\sum_{p \in \Pri \backslash Q} \frac{\log \log p}{p} < \infty.
	\end{equation*}
	For $q>1$, the extension is in  $L^q_{\mathrm{loc}}$ on the abscissa $\sigma=1$ if
	\begin{equation*}
		\sum_{p \in \Pri \backslash Q} \frac{\log^{1/q'} p}{p} < \infty,
	\end{equation*}
	where $q'>1$ is the real number satisfying $q^{-1} + q'^{-1} = 1$. Conversely,
	if 
	\begin{equation*}
		\sum_{p \in \Pri \backslash Q} \frac{\log^{1/q'} p}{p} = \infty,
	\end{equation*}
	then the extension of $\psi$ on $\sigma=1$ is not in $L^{r}_{\mathrm{loc}}$ for $r>q$.
\end{theorem}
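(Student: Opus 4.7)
The plan is to reduce the $L^p_{\mathrm{loc}}$-behaviour of $\psi_K$ at $\sigma=1$ (near $t=0$) to that of a Dirichlet-type series with prime frequencies, and then prove sufficiency via Minkowski and necessity via Tonelli (for $L^1$) or a dyadic-scale/H\"older argument (for $L^q$, $q>1$). Following the proof of Lemma \ref{asymptotic density lemma}, we decompose
$$\psi_K(s) = \frac{1}{s-1}\left[\frac{1}{s\zeta_J(s)} - \frac{1}{\zeta_J(1)}\right] + \frac{\psi(s)}{s\zeta_J(s)},$$
where the second summand is smooth and bounded on $\sigma=1$ under the hypothesis \eqref{finite prime sum condition}. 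Expanding $\log \zeta_J(1+it) = P(t) + R(t)$, where $P(t) = \sum_{p \in \Pri\setminus Q} p^{-1-it}$ (the $k=1$ Taylor terms of $-\log(1-p^{-1-it})$) and $R$ gathers the $k \geq 2$ contributions (smooth with bounded derivatives), and using $e^{-w}-1 = -w + O(w^2)$, a standard calculation reduces the question to the $L^p_{\mathrm{loc}}$-behaviour of $(P(t)-P(0))/t$ near $t=0$, modulo bounded functions.

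For the sufficient conditions, apply Minkowski's integral inequality:
$$\left\|\frac{P(t)-P(0)}{t}\right\|_{L^q(-1,1)} \leq \sum_{p \in \Pri\setminus Q}\frac{1}{p}\left\|\frac{e^{-it\log p}-1}{t}\right\|_{L^q(-1,1)}.$$
After substituting $u = t\log p$, the $p$-th norm equals $(\log p)^{1-1/q}\bigl(\int_{-\log p}^{\log p}|e^{-iu}-1|^q|u|^{-q}\,du\bigr)^{1/q}$, which is $O\bigl((\log p)^{1/q'}\bigr)$ for $q>1$ (the integral is $O(1)$) and $O(\log\log p)$ for $q=1$ (the integral grows logarithmically). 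For the $L^1$ necessity, exploit the sign of $-\operatorname{Re}(P(t)-P(0)) = \sum_p p^{-1}(1-\cos(t\log p)) \geq 0$, so that by Tonelli,
$$\int_0^1 \frac{|\operatorname{Re}(P(t)-P(0))|}{t}\,dt = \sum_p \frac{1}{p}\int_0^{\log p}\frac{1-\cos u}{u}\,du = \sum_p \frac{\log\log p + O(1)}{p};$$
this forces $\sum \log\log p/p < \infty$ whenever $\psi_K \in L^1_{\mathrm{loc}}$.

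The partial $L^q$-converse for $q>1$ is the main difficulty. On the dyadic interval $D_k = (2^{-k-1}, 2^{-k}]$, for $p \in \Pri \setminus Q$ with $\log p \in \tilde R_k = (2^k, 2^{k+1}]$, we have $t\log p \in [1/2, 2]$, so $1-\cos(t\log p) \geq 1-\cos(1/2) > 0$. Setting $c_k = \sum_{p \in \Pri\setminus Q,\,\log p \in \tilde R_k} 1/p$, this gives the pointwise bound $-\operatorname{Re}(P-P(0))/t \gtrsim 2^k c_k$ on $D_k$, and hence
$$\left\|\frac{P-P(0)}{t}\right\|_{L^r(0,1)}^r \gtrsim \sum_{k\geq 0}2^{k(r-1)}c_k^r.$$
By H\"older's inequality,
$$\sum_{k} 2^{k/q'}c_k = \sum_{k} (c_k 2^{k/r'})\cdot 2^{k(1/q'-1/r')} \leq \Bigl(\sum_k c_k^r 2^{k(r-1)}\Bigr)^{1/r}\Bigl(\sum_k 2^{kr'(1/q'-1/r')}\Bigr)^{1/r'},$$
and the second factor is a convergent geometric series exactly when $1/q' < 1/r'$, i.e., when $r > q$. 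Since $\sum_p \log^{1/q'} p / p \asymp \sum_k 2^{k/q'}c_k$, the contrapositive is the theorem: divergence of the former forces $\psi_K \notin L^r_{\mathrm{loc}}$ for every $r > q$. The main obstacle is establishing this dyadic lower bound despite the overlap of the positive functions $p^{-1}(1-\cos(t\log p))/t$; the H\"older endpoint loss is exactly what accounts for the strict inequality $r > q$ in the statement.
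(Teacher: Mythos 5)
Your proof is correct and takes a genuinely different route from the paper's, which is worth comparing. The paper works with $\zeta_J(1+\im t)-\zeta_J(1)$ as a Dirichlet series over the integers $n\in J$: it uses $L^q$--$L^{q'}$ duality, Fubini, and the estimate $\abs{\int_0^u\hat\phi}\lesssim\norm{\phi}_{q'}u^{1/q'}$ to reduce the $L^q$ norm to sums of the form $\sum_{n\in J}n^{-1}f(\log n)$, and then invokes Lemma~\ref{malliavin lemma} to translate these into the corresponding prime sums $\sum_{p}p^{-1}f(\log p)$. You instead take logarithms and linearize: writing $\log\zeta_J(1+\im t)=P(t)+R(t)$ with $P(t)=\sum_{p\in\Pri\setminus Q}p^{-1-\im t}$ and $R$ smooth, and using $\e^w-1=w(1+\Bigoh{w})$ with $w=w(t)\to 0$, you reduce directly to the $L^q$ behaviour near $t=0$ of $(P(t)-P(0))/t$. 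This bypasses Lemma~\ref{malliavin lemma} entirely, which is a real simplification. Your sufficiency proof via Minkowski plus the scaling substitution $u=t\log p$ is equivalent in content to the paper's duality estimate but more direct. Your $L^1$-necessity argument via Tonelli, exploiting that $-\Re(P(t)-P(0))=\sum_p p^{-1}(1-\cos(t\log p))$ is nonnegative so that the modulus can be dropped, is cleaner than the paper's, which chooses the specific test function $\phi=\chi_I$. For the partial converse when $q>1$, the paper only sketches a test-function construction and leaves the details to the reader; your dyadic localization --- $t\in(2^{-k-1},2^{-k}]$ paired with $\log p\in(2^k,2^{k+1}]$ so that $t\log p\in[1/2,2]$ and $1-\cos(t\log p)$ is bounded below --- followed by the H\"older step $\sum_k 2^{k/q'}c_k\leq(\sum_k 2^{k(r-1)}c_k^r)^{1/r}(\sum_k 2^{kr'(1/q'-1/r')})^{1/r'}$ gives a complete and transparent argument, and the geometric series explains exactly why the loss $r>q$ appears. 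The one place your write-up is thinner than it should be is the linearization step: you should spell out that $w(t)=(P(t)-P(0))+(R(t)-R(0))\to 0$ as $t\to 0$ (using $\sum p^{-1}<\infty$ for $P$ and smoothness for $R$), so that $\abs{\e^{w(t)}-1}\asymp\abs{w(t)}$ on a neighbourhood of $0$, and that $\abs{w(t)/t}$ differs from $\abs{(P(t)-P(0))/t}$ by the bounded quantity $\abs{(R(t)-R(0))/t}\leq\norm{R'}_\infty$, whence the $L^q_\loc$ statement near $t=0$ transfers back and forth. With that filled in, your proof is complete.
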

Before the proof of the theorem, which follows an argument similar to that of Lemma \ref{asymptotic density lemma}, we give a lemma which ties together summability conditions on subsets of integers and their generating prime numbers. It follows easily by using the measure calculus described by P.~Malliavin in \cite{malliavin61}, however we provide a more elementary argument for the readers convenience.
\begin{lemma} \label{malliavin lemma}
	Let $f : \N \rightarrow \R_+$ satisfy $f(nm) \leq f(n) + f(m)$, $f(1) = 0$ and $f(n) \geq 1$ for $n$ big enough. 
	If the primes $P$ generate the integers $J$ then
	\begin{equation*}
		\sum_{n \in J} \frac{f(n)}{n} < \infty \quad \iff \quad \sum_{p \in P} \frac{f(p)}{p} < \infty.
	\end{equation*}
\end{lemma}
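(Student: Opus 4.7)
The forward direction $\sum_{n \in J} f(n)/n < \infty \Rightarrow \sum_{p \in P} f(p)/p < \infty$ is immediate: since the primes generating $J$ all lie in $J$, we have $P \subset J$, and nonnegativity of $f$ makes the prime sum a subseries of the full sum.

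For the converse, the plan is to bound $f(n)$ pointwise using subadditivity and then evaluate the resulting double sum by an Euler-product manipulation. Assume $\sum_{p \in P} f(p)/p < \infty$. The first step is to extract the finiteness of $\zeta_J(1)$: since $f(p) \geq 1$ for all but finitely many $p \in P$, the summability of $f(p)/p$ yields $\sum_{p \in P} 1/p < \infty$, whence $\zeta_J(1) = \prod_{p \in P} (1-1/p)^{-1} < \infty$ by the Euler product. Next, iterating the inequality $f(nm) \leq f(n) + f(m)$ together with $f(1) = 0$ gives
$$f(n) \leq \sum_{p \in P} v_p(n) f(p), \qquad n \in J,$$
where $v_p(n)$ denotes the $p$-adic valuation. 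Inserting this into $\sum_{n \in J} f(n)/n$ and interchanging the two sums (valid by nonnegativity) reduces the problem to evaluating $\sum_{n \in J} v_p(n)/n$ for each fixed prime $p \in P$.

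To compute this, I write any $n \in J$ uniquely as $n = p^a m$ with $a \geq 0$, $m \in J$ and $p \nmid m$. The inner sum then factors as
$$\sum_{n \in J} \frac{v_p(n)}{n} = \Bigl(\sum_{a=0}^\infty \frac{a}{p^a}\Bigr)\sum_{\substack{m \in J \\ p \nmid m}}\frac{1}{m} = \frac{p}{(p-1)^2} \cdot \zeta_J(1)\Bigl(1 - \frac{1}{p}\Bigr) = \frac{\zeta_J(1)}{p-1}.$$
Combining this with $(p-1)^{-1} \leq 2/p$ for $p \geq 2$ yields
$$\sum_{n \in J} \frac{f(n)}{n} \leq \zeta_J(1) \sum_{p \in P} \frac{f(p)}{p-1} \leq 2\,\zeta_J(1) \sum_{p \in P} \frac{f(p)}{p} < \infty,$$
as desired.

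The only delicate point is securing $\zeta_J(1) < \infty$, without which the Euler-product evaluation of the inner sum would be meaningless. This is precisely where the hypothesis \emph{$f(n) \geq 1$ for $n$ big enough} plays its role: it converts summability of $f(p)/p$ into summability of $1/p$, which is equivalent to $\zeta_J(1) < \infty$. Once this is in place, the rest of the argument is bookkeeping with the Euler factorization; no subtle analytic input is required.
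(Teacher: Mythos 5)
Your proof is correct, and it reaches the same inequality as the paper by a more elementary route. The paper's argument packages the subadditivity of $f$ into a derivation $D_f$ on Dirichlet series with positive coefficients, establishes the Leibniz-type bound $D_f(FG) \leq D_f(F)\,G + F\,D_f(G)$, applies it to the Euler factorisation $\sum_{n\in J} n^{-\sigma} = \e^{\sum_{p\in P} p^{-\sigma}}R(\sigma)$ on the half-line $\sigma>1$, and only then lets $\sigma\to 1^+$ via monotone convergence. You instead unwind the subadditivity to the pointwise bound $f(n)\le\sum_p v_p(n)f(p)$, interchange the two sums by Tonelli, and evaluate $\sum_{n\in J}v_p(n)/n=\zeta_J(1)/(p-1)$ directly, having first secured $\zeta_J(1)<\infty$ from the hypothesis $f(p)\ge 1$ for large $p$. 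The two arguments are mathematically the same core idea (the Leibniz rule for $D_f$ \emph{is} the pointwise subadditivity bound in disguise), but yours avoids the operator formalism and the limiting argument, working directly at $\sigma=1$; what the paper's formulation buys is a cleaner statement of the invariance of the abscissa of convergence under $D_f$, which could be reused elsewhere, but for proving this lemma alone your presentation is leaner and equally rigorous. You also explicitly treat the (trivial) forward implication, which the paper leaves implicit.
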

\begin{proof}
	One way to prove this is, for $\sigma > 1$, to establish the inequality
	\begin{equation*}
		\sum_{n\in J} f(n) n^{-\sigma}	 \lesssim \left( \sum_{p \in P} f(p) p^{-\sigma} \right)
		\e^{\sum_p p^{-\sigma}},
	\end{equation*}
	and then conclude by the monotone convergence theorem.
	To achieve this we study the linear map $D_f : \sum a_n n^{-\sigma} \rightarrow \sum a_n f(n) n^{-\sigma}$. It is not hard to show that the abscissa of absolute convergence is invariant under $D_f$. Moreover, for Dirichlet series $F,G$ with positive coefficients, it holds that $D_f(FG)(\sigma) \leq D_f(F) G(\sigma) + F D_f(G)(\sigma)$. We use this on the identity
	\begin{equation*}
		\sum_{n\in J } n^{-\sigma} = \e^{\sum_{p \in P}p^{-\sigma}} R(\sigma),
	\end{equation*}
	where the function
	\begin{equation*}
		R(\sigma) = \e^{- \sum_{p \in P} \log(1-p^{-\sigma}) - \sum_{p \in P} p^{-\sigma} } 
	\end{equation*}
	is given by a Dirichlet series that converges absolutely for $\sigma > 1/2$. 
	Here we used the Euler product formula for the function $\zeta_J$.
	The desired inequality is now seen to hold since
	\begin{equation*}
		D_f(  \e^{\sum_{p \in P}p^{-\sigma}} ) \leq \left(\sum_{p \in P} f(p) p^{-\sigma} \right)\e^{\sum_{p \in P} p^{-\sigma}}.
	\end{equation*}
\end{proof}
	\begin{proof}[Proof of Theorem \ref{arithmetic theorem 2}]
	Both the formula
	\begin{equation*}
		\mathcal{Z}_{K,I} = \zeta_J^{-1}(1) \mathrm{Id} + \Psi_{K,I}, 
	\end{equation*}
	where $\Psi_{K,I}$ is a compact operator,
	and the statement that $\psi_K$ extends to a pseudo-function on  $\sigma=1$, 
	follow immediately from Theorem \ref{identity theorem}.

	As in the hypothesis, assume that $\zeta^{-1}_J(1) > 0$. By the factorisation $\zeta(s) = \zeta_K(s) \zeta_J(s)$ and the
	formula (\ref{asymptotic formula}) for the Riemann zeta function we have the identity
	\begin{align*}
		\psi_K(s) &=  \frac{\zeta_K(s)}{s} - \frac{1}{\zeta_J(1)} \frac{1}{s-1}\\
			&= \frac{1}{s-1} \left( \frac{1}{s\zeta_J(s)} - \frac{1}{\zeta_J(1)} \right)
			+ \frac{\psi(s)}{s}.
	\end{align*}
	Under our assumption, it follows from the Euler product formula
	that $\zeta_J(1 + \im t)$ is a continuous function bounded away from zero. Therefore
	\begin{equation*} 
		\frac{1}{t} \left( \frac{1}{\zeta_J(1+ \im t)} - \frac{1}{\zeta_J(1)} \right) \in L^q_{\loc}(\R)
		\iff
		\frac{ \zeta_J(1+ \im t) - \zeta_J(1) }{t} \in L^q_\loc(\R).
	\end{equation*}
	Let $q' \in [1,\infty]$ be such that $q^{-1} + q'^{-1} = 1$. Hence, by duality 
	\begin{equation*}
		\Norm{\frac{\zeta_J(1+ \im t) - \zeta_J(1)}{t} }_{L^q(I)}
		=
		\sup_{\phi \in L^{q'}(I)} \Abs{ \int_I \phi(t) \frac{\zeta_J(1 + \im t) - \zeta_J(1)}{t} \dif t}.
	\end{equation*}
	A brief calculation, where we use Fubini's theorem twice along with the properties of the counting measure $\dif \pi_J$, 
	\begin{align*}
		\int_I \phi(t)  \frac{\zeta_J(1 + \im t) - \zeta_J(1)}{t} \dif t
		&=
		\int_1^\infty \int_I \phi(t) \frac{x^{-\im t} - 1 }{t} \dif t \frac{\dif \pi_J(x)}{x} \\
		&= 
		\sqrt{2\pi } \int_1^\infty \int_0^{\log x} \hat{\phi}(\xi) \dif \xi \frac{\pi_J(x)}{x}\\
		&=
		\sqrt{2\pi} \sum_{n \in J} \frac{1}{n} \underbrace{\int_0^{\log n} \hat{\phi} (\xi) \dif \xi}_{(*)}.
	\end{align*}
	We proceed to estimate $(*)$ for $\phi$ in $L^{q'}(I)$. First we let $q=1$. Then $q' = \infty$. Recall that $I$ is a bounded and symmetric interval, hence $I = (-T,T)$ for some $T>0$. By Fubini's theorem and a change of variables,
	\begin{align*}
		\Abs{\int_0^u \hat{\phi}(\xi) \dif \xi} 
		&=
		\frac{1}{\sqrt{2\pi}} \Abs{\int_{-T}^T \phi(t) \frac{\e^{-\im t u} - 1}{t} \dif t}
		\\
		&=
		\frac{1}{\sqrt{2\pi}} \Abs{\int_{-uT}^{uT} \phi\left(\frac{t}{u}\right) \frac{\e^{-\im t} - 1}{t} \dif t}
		\leq
		\frac{\norm{\phi}_{L^\infty(I)}}{\sqrt{2\pi}} \int_{-uT}^{uT} \frac{\abs{\e^{-\im t} -1}}{t} \dif t.
	\end{align*}
	The integral in the last expression is clearly $\Bigoh{\log u}$.
	  It now follows that
	\begin{equation*}
		\Norm{\frac{\zeta_J(1+ \im t) - \zeta_J(1)}{t} }_{L^1(I)}
		 \lesssim \sum_{n \in J}  \frac{1}{n} \log \log n.
	\end{equation*}	
	By Lemma \ref{malliavin lemma}, 
	\begin{equation*} 
	 	\sum_{n \in J} \frac{\log \log n}{n} < \infty \iff 
		\sum_{p \in \Pri \backslash Q} \frac{\log \log p}{p} < \infty.
	\end{equation*}		
	This proves the sufficiency for $q = 1$. As for the necessity, assume that \break
	$\sum_{p \in \Pri \backslash Q} p^{-1}\log \log p = \infty$ and set $\phi = \chi_{I}(x)$. With this choice
	\begin{equation*}
		\int_0^u \hat{\phi}(\xi)\dif \xi = \frac{1}{\sqrt{2\pi}} \int_{-T}^{T} \frac{\e^{-\im t u}-1}{\im t} \dif t = \int_{-uT}^{uT} \frac{\e^{-\im t}-1}{\im t} \dif t.
	\end{equation*}
	The result now follows since it is clear that for fixed $u >0$ it holds that, 
	\begin{equation*}
		\int_{0}^{u} \frac{\e^{-\im t}-1}{t} \dif t < \infty,
	\end{equation*}
	while as $u$ grows we have
	\begin{equation*}
		\int_{1}^u \frac{\dif t}{t}= \log u. 
	\end{equation*}
	
	For $q > 1$, the necessity is proved by using the fact that for $\phi \in L^{q'}(I)$ we have
	\begin{equation*}
		\Abs{ \int_0^u \hat{\phi}(\xi) \dif \xi} \lesssim \norm{\phi}_{q'} u^{1/q'}.
	\end{equation*}
	The sufficiency follows since for all $r > q'$ there exists $\phi \in L^{q'}(I)$ for which
	\begin{equation*}
		\int_0^u \hat{\phi}(\xi) \dif \xi = x^{1/r} + \Bigoh{1}.  
	\end{equation*}
 We leave the details to the reader.
	\end{proof}

\section{Remarks on the relation to the prime number theorem} \label{section: prime numbers}

In this section, we present our final result on the operator $\mathcal{Z}_{K,I}$. 
Let $R = (r_i)$ be an increasing sequence of real numbers greater than one and let $N$ be the multiplicative semi-group it generates. We say that $R$ is the Beurling prime numbers for the Beurling integers $N$. This point of view leads to a generalised type of number theory, initiated by Arne Beurling in \cite{beurling37}. The focus of the theory is to investigate how the asymptotic structure of $R$ relates to that of $N$. For a survey, see \cite{hilberdink_lapidus06}.
In our case, $Q$ corresponds to the Beurling primes and $K$ to the Beurling integers. 
We say that the prime number theorem holds for $Q$ if
\begin{equation*} 
	\pi_Q(x) \sim \frac{x}{\log x}.
\end{equation*}
The following is now true.
\begin{theorem} \label{beurling theorem}
	Let $Q \subset \Pri$ generate $K \subset \N$. Then the prime number theorem for $Q$ neither implies nor is implied by the lower boundedness of the operator $\mathcal{Z}_{K,I}$.
\end{theorem}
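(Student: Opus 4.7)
The plan is to apply Theorem \ref{arithmetic theorem}, which identifies the lower-boundedness of $\mathcal{Z}_{K,I}$ with the condition $\sum_{p \in \Pri \setminus Q} 1/p < \infty$. It then suffices to construct two subsets $Q \subset \Pri$: one for which the prime number theorem holds but $\sum_{p \notin Q} 1/p = \infty$, and one for which $\sum_{p \notin Q} 1/p < \infty$ but the prime number theorem fails.

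For the first direction, the guiding principle is that $\sum 1/p$ measures a logarithmic density which can diverge along sets too thin to spoil the coarser asymptotic $\pi_Q(x) \sim x/\log x$. Concretely, I would set $S = \bigcup_k S^{(k)}$, where $S^{(k)}$ consists of the first $\lfloor 2^k/(k \log k) \rfloor$ primes in the dyadic interval $[2^k, 2^{k+1}]$. Since every prime in $S^{(k)}$ is at most $2^{k+1}$, the block contributes at least $\tfrac{1}{2 k \log k}$ to $\sum_{p \in S} 1/p$, so this sum diverges. On the other hand, the sizes $|S^{(k)}|$ grow geometrically in $k$, so a geometric series estimate gives $\pi_S(2^{k+1}) = O(2^k/(k \log k))$; this is $o(2^k/k) = o(x/\log x)$, and hence $Q = \Pri \setminus S$ satisfies the prime number theorem while the lower bound for $\mathcal{Z}_{K,I}$ fails.

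For the reverse direction, one instead concentrates the removed primes in relatively wide intervals placed far apart, so as to destroy the exact asymptotic constant $1$ in $\pi_Q(x)/(x/\log x)$ while keeping the reciprocal sum small. I would take $S = \bigcup_k (\Pri \cap [2^{2^k}, 2^{2^k + 1}])$. Mertens' second theorem gives
\begin{equation*}
\sum_{p \in S \cap [2^{2^k}, 2^{2^k+1}]} \frac{1}{p} \sim \log\log 2^{2^k+1} - \log\log 2^{2^k} = \log(1 + 2^{-k}) = O(2^{-k}),
\end{equation*}
so the total reciprocal sum converges. However, the prime number theorem applied to the last interval shows $\pi_S(2^{2^k+1}) \sim 2^{2^k}/(2^k \log 2)$ (the last block dominates by super-exponential growth), whereas $2^{2^k+1}/\log 2^{2^k+1} \sim 2 \cdot 2^{2^k}/(2^k \log 2)$. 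Along the subsequence $x_k = 2^{2^k+1}$ this forces $\pi_Q(x_k)/(x_k/\log x_k) \to 1/2$, so the prime number theorem fails for $Q$ while the lower bound for $\mathcal{Z}_{K,I}$ holds.

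The main technical obstacle in both constructions is to verify that at the end of the $k$-th block, $\pi_S(x)$ is accurately captured by the contribution of that block alone, so that the earlier blocks do not interfere. This is ensured by choosing the base sequences ($2^k$ in the first case, $2^{2^k}$ in the second) to grow fast enough relative to the block sizes that a routine geometric-series estimate suffices; all remaining estimates follow from the prime number theorem on short intervals and Mertens' second theorem.
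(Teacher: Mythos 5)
Your proposal is correct, and both constructions do the job. The comparison with the paper's argument is worth making explicit, since the two directions differ in how closely you follow the original.

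For the direction ``PNT holds but $\mathcal{Z}_{K,I}$ is not bounded below,'' your construction is essentially identical to the paper's: remove the first $\lfloor 2^k/(k\log k)\rfloor$ primes from each dyadic block $[2^k, 2^{k+1}]$. The difference lies in the verification. The paper routes the PNT conclusion through its Lemma~\ref{ pnt lemma and proposition}, checking that $\sum_{p\in P\cap(\delta x, x)} (\log p)/p = o(1)$. You instead show directly that $\pi_S(x)=o(x/\log x)$ by a geometric-series bound on $\pi_S(2^{k+1})$, which immediately gives $\pi_Q \sim \pi$. This is slightly more elementary and avoids the lemma altogether, at the modest cost of being specific to this particular construction. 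Your argument should, for completeness, note that controlling $\pi_S$ at the dyadic points $2^{k+1}$ suffices for all $x$ by monotonicity; this is a one-line remark.

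For the direction ``$\mathcal{Z}_{K,I}$ is bounded below but PNT fails,'' your construction is genuinely different from the paper's. The paper argues abstractly: it uses Mertens' formula to pick a sequence $x_n$ along which $\sum_{\delta x_n \le p \le x_n} (\log p)/p$ stays bounded away from zero, then passes to a sparse subsequence so that $\sum 1/p$ over the removed primes converges, and finally invokes Lemma~\ref{ pnt lemma and proposition} to conclude that PNT fails. You give a fully explicit example, removing the primes in the superdyadic blocks $[2^{2^k}, 2^{2^k+1}]$: Mertens' second theorem bounds the contribution of block $k$ to the reciprocal sum by $O(2^{-k})$, and the PNT applied to the (non-short) interval $[2^{2^k}, 2^{2^k+1}]$, together with the superexponential spacing, forces $\pi_Q(x_k)/(x_k/\log x_k)\to 1/2$ along $x_k = 2^{2^k+1}$. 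Again you bypass Lemma~\ref{ pnt lemma and proposition} by directly exhibiting a subsequential limit different from $1$. The trade-off is that the paper's argument is shorter once the lemma is available, while yours is self-contained and quantitative (it pins down the subsequential limit $1/2$). Both are correct and complete.
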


To prove the theorem, we need a lemma.
Note that the symbol $f(x) \sim g(x)$ is taken to mean $f(x)/g(x) \rightarrow 1$ as $x \rightarrow \infty$. 
\begin{lemma} \label{ pnt lemma and proposition}
	Let $Q \subset \Pri$ generate $K \subset \N$ and let $J$ denote the integers generated by the primes not in $Q$. Then the prime number theorem holds for the set $K$ if and only if
	\begin{equation} \label{ prime theorem condition}
		\sum_{p \in \Pri\backslash Q \cap (\delta x, x)} \frac{\log p}{p} = \Littleoh{1}, \quad \text{for all} \; \delta \in (0,1).
\end{equation}
\end{lemma}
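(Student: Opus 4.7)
The plan is to reduce the statement to an equivalent assertion about Chebyshev-type summatory functions, and then establish an elementary equivalence via a geometric telescoping argument.

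Let me set $A = \Pri \setminus Q$ and $\theta_A(x) = \sum_{p \in A,\, p \le x} \log p$, with $\theta_Q$ defined analogously. By the classical prime number theorem, the full Chebyshev function satisfies $\theta(x) \sim x$, and standard partial summation shows that $\pi_Q(x) \sim x/\log x$ is equivalent to $\theta_Q(x) \sim x$. Since $\theta(x) = \theta_Q(x) + \theta_A(x)$, the prime number theorem for $Q$ is therefore equivalent to $\theta_A(x) = \littleoh{x}$. So the problem reduces to showing that $\theta_A(x) = \littleoh{x}$ is equivalent to the stated sum condition.

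Next, the two elementary inequalities
\begin{equation*}
    \frac{\theta_A(x) - \theta_A(\delta x)}{x} \;\le\; \sum_{p \in A \cap (\delta x, x]} \frac{\log p}{p} \;\le\; \frac{\theta_A(x) - \theta_A(\delta x)}{\delta x}
\end{equation*}
show that the stated condition \eqref{ prime theorem condition} is equivalent to the requirement that $\theta_A(x) - \theta_A(\delta x) = \littleoh{x}$ for every $\delta \in (0,1)$.

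It now remains to prove the equivalence between $\theta_A(x) = \littleoh{x}$ and the localised condition $\theta_A(x) - \theta_A(\delta x) = \littleoh{x}$ for all $\delta \in (0,1)$. The forward direction is trivial since $\theta_A$ is nondecreasing, giving $\theta_A(x) - \theta_A(\delta x) \le \theta_A(x) = \littleoh{x}$. For the converse, I would fix any single $\delta \in (0,1)$ and, given $\epsilon > 0$, choose $x_0$ so that $\theta_A(y) - \theta_A(\delta y) < \epsilon y$ for $y \ge x_0$. Taking $K$ to be the largest integer with $\delta^K x \ge x_0$ and telescoping gives
\begin{equation*}
    \theta_A(x) \;=\; \theta_A(\delta^{K+1} x) + \sum_{k=0}^{K} \bigl(\theta_A(\delta^k x) - \theta_A(\delta^{k+1} x)\bigr) \;\le\; \theta_A(x_0/\delta) + \epsilon x \sum_{k=0}^{K} \delta^k \;\le\; C + \frac{\epsilon x}{1-\delta}.
\end{equation*}
Hence $\limsup_{x\to\infty} \theta_A(x)/x \le \epsilon/(1-\delta)$, and letting $\epsilon \to 0$ gives $\theta_A(x) = \littleoh{x}$.

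The main step requiring care is the telescoping argument in the last paragraph; the remaining steps are essentially standard applications of partial summation and trivial monotonicity bounds. No deeper input from Beurling number theory is needed beyond the classical prime number theorem itself.
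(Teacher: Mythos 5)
Your proof is correct, and it takes a genuinely different route from the paper's in the two key technical steps, even though the overall skeleton (sandwich the weighted sum between differences of a summatory function, then pass from the local condition to the global one) is the same. The paper works with the counting function $\pi_P$ where $P = \Pri\setminus Q$: it sandwiches the weighted prime sum between $\frac{\log x}{x}\big(\pi_P(x)-\pi_P(\delta x)\big)$ and $\frac{\log\delta x}{\delta x}\big(\pi_P(x)-\pi_P(\delta x)\big)$, reduces to showing that $\pi_P(x)-\pi_P(\delta x)=\littleoh(x/\log x)$ implies $\pi_P(x)=\littleoh(x/\log x)$, and proves this last step by rewriting the hypothesis as
$$\pi_P(x)\frac{\log x}{x}=\delta\Big(\pi_P(\delta x)\frac{\log\delta x}{\delta x}\Big)\frac{\log x}{\log\delta x}+\Littleoh{1}$$
and taking a $\limsup$, exploiting the scaling factor $\delta<1$. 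You instead pass to the Chebyshev function $\theta_A$, which absorbs the $\log p$ weight and makes the sandwiching tighter and simpler (just $1/x$ versus $1/(\delta x)$), and then prove the local-to-global step by an explicit geometric telescoping across the scales $\delta^k x$, arriving at $\limsup_{x\to\infty}\theta_A(x)/x\le\epsilon/(1-\delta)$. Both arguments are elementary and both actually need only a single fixed $\delta\in(0,1)$; your telescoping has the advantage of being more transparent and of producing an explicit bound showing exactly how the error depends on $\delta$, while the paper's argument stays closer to the counting function $\pi_P$ and avoids the (standard, but still requiring justification) equivalence between $\pi_Q(x)\sim x/\log x$ and $\theta_Q(x)\sim x$ that your reduction invokes.
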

\begin{proof}
	Let $P = \Pri \backslash Q$. It is clear that the prime number theorem holds for $K$ if and only if $\pi_P(x) = \littleoh(x/\log x)$. Moreover, it is readily seen that
\begin{equation*}
	 \frac{\log x}{ x} (\pi_P( x) - \pi_P(\delta x)) \leq \sum_{p \in (\delta x,x)} \frac{\log p}{p} \leq \frac{\log \delta x}{ \delta x} (\pi_P( x) - \pi_P( \delta x)).
\end{equation*}
So we have to show that $\pi_P(x) = \littleoh(x/\log x)$ is equivalent to the statement that for all $\delta >0$ it holds that $\pi_P(x) - \pi_P(\delta x) = \littleoh(x/\log x)$. One direction is immediate. 
 For the other, assume that $\pi_P(x) - \pi_P(\delta x) = \Littleoh{x/\log x}$. Rewrite this assumption in the form
	\begin{equation*}
	 	\pi_P(x) \frac{\log x}{x} = \delta \left( \pi_P(\delta x) \frac{\log \delta x}{\delta x} \right)\frac{\log x}{\log \delta x} + \Littleoh{1}.
	\end{equation*}
	Hence, for all $\delta>0$, we have
	\begin{equation*}
		\limsup_{x \rightarrow \infty} \pi_P(x) \frac{\log x}{x} < \delta.
	\end{equation*}
\end{proof}

	\begin{proof}[Proof of Theorem \ref{beurling theorem}]
	Recall that by Theorem \ref{arithmetic theorem}, the lower boundedness of the operator $\mathcal{Z}_{K,I}$ is equivalent to the condition
	\begin{equation} \label{ betingelse 2}
		\sum_{p \in \Pri \backslash Q} \frac{1}{p} < \infty.
	\end{equation}
	Moreover, an immediate consequence of  Lemma \ref{asymptotic density lemma} is that this is equivalent to the condition
	\begin{equation} \label{ betingelse 1}
		\liminf_{x \rightarrow \infty }\frac{\pi_K(x)}{x} > 0.
	\end{equation}

	First we seek a set of primes $Q$ for which Panejah's condition holds but the prime number theorem does not.
	This part of the theorem follows by comparing the condition \eqref{ betingelse 1} to the condition \eqref{ prime theorem condition} of Lemma \ref{ pnt lemma and proposition} in combination with 
	a variant of Merten's formula (see e.g. \cite{montgomery_vaughan07}[p. 50]):
	\begin{equation} \label{ the formula}
		\sum_{p \leq x} \frac{\log p}{p}  = \log x + \Bigoh{1}. 
	\end{equation}
	One the one hand, \eqref{ the formula} implies that for
	$\delta > 0$ small enough, then
	\begin{equation*}
		\liminf_{x \rightarrow \infty} \sum_{\delta x \leq p \leq x} \frac{\log p}{p}  > 0.
	\end{equation*}
	We choose a sequence $\seq{x_n}_{n\in \N}$ which realises this condition and for which the intervals $(\delta x_n, x_n)$ do not overlap. On the other hand, \eqref{ the formula} implies that
	\begin{equation*}
		\sum_{\delta x \leq p \leq x} \frac{1}{p} \lesssim \frac{ 1}{\log \delta x}.
	\end{equation*}
	Choose a sub-sequence of $\seq{x_{n_k}}$ for which $\sum_{k} (\log x_{n_k})^{-1}< \infty$. Let $P = \Pri \cap \left( \cup(\delta x_{n_k}, x_{n_k})\right)$, and set $Q  = \Pri \backslash P$. This set does the job.
	
	Next, we seek a set $Q$ for which the prime number theorem holds, but Panejah's condition fails. Consider the consecutive intervals $I_k= (2^k,2^{k+1})$. In each interval, choose essentially the first $2^{k}/(k \log k)$ prime numbers. This is seen to be exactly possible for large $k$ using the fact that the $n$'th prime $p_n \sim n \log n$. Denote the set of primes chosen in this way from the interval $I_k$ by $P_k$. Set $P = \cup P_k$ and let $Q = \Pri \backslash P$. 
	It now follows that the condition \eqref{ betingelse 2} does not hold, since
	\begin{equation*}
		\sum_{p \in P} \frac{1}{p} \gtrsim \sum_{k \in \N} \frac{1}{k \log k} = \infty. 
	\end{equation*}
	To see that the prime number theorem for $Q$ holds, we let $\delta \in (0,1)$ and
	readily 
	check that for $x > x_\delta$ we have
	\begin{equation*}
		\sum_{p \in P \cap (\delta x,x)} \frac{\log p}{p} \lesssim \frac{\log x}{ x} \frac{x}{\log x \log \log x } = \frac{1}{\log x}.
	\end{equation*}
	Hence the condition \eqref{ prime theorem condition} of Lemma \ref{ pnt lemma and proposition} holds.
\end{proof}


\section{Concluding remarks} \label{concluding remarks}

The connection between the operator $\mathcal{Z}_{K,I}$ and the frame \eqref{the frame} was essentially observed in the paper \cite{olsen_saksman09}. There the operator $\mathcal{Z}_{\N,I}$ was used to study the Dirichlet-Hardy space
\begin{equation*}
	 \mathscr{H}^2 = \Set{ \sum_{n \in \N} a_n n^{-s} \; : \; \sum_{n \in \N} \abs{a_n}^2 < + \infty }.
\end{equation*}
By the Cauchy-Schwarz inequality, the functions in this space are analytic for $\sigma > 1/2$. It has been shown \cite{montgomery94, hls97}  that for a bounded interval $I$ there exists a constant $C>0$, only depending on the length of the interval $I$, such that for every $F \in \mathscr{H}^2$,
\begin{equation*}
	  \int_{I} \Abs{F\left( \frac{1}{2} + \im t\right)}^2 \dif t 
	 \leq C \norm{F}^2_{\mathscr{H}^2}.
\end{equation*}
This implies that for $F \in \Hp^2$ then $F(s)/s$ is in the classical Hardy space $H^2$ on the half-plane $\sigma > 1/2$. 
In particular, it follows that functions in $\Hp^2$ have non-tangential boundary values almost 
everywhere on the abscissa $\sigma=1/2$. This gives meaning to the notation $F(1/2 + \im t)$ for $F \in \Hp^2$.
A special case of the main result of \cite{olsen_saksman09} is now stated as follows.
\begin{theoa}[Olsen and Saksman 2009]
	Let $I$ be some bounded interval in $\R$ and $v \in L^2(\R)$. Then there exists a function $F \in \Hp^2$ 
	such that $\Re F(1/2 + \im t) = v(t)$ almost everywhere in $L^2(I)$.
\end{theoa}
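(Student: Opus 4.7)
My plan is to construct the Dirichlet series $F$ directly from $v$ by inverting the frame operator. The key observation is that for $K = \N$, Corollary \ref{the corollary} applies: condition (c) of Theorem \ref{lower bound theorem} is trivially satisfied (taking $\delta = 1/2$, one has $\pi_\N(x) - \pi_\N(x/2) \sim x/2$), so $\mathscr{G}_\N$ restricted to $I$ is a frame for $L^2(I)$ and $\mathcal{Z}_{\N,I}$ is bounded and invertible on $L^2(I)$. Since the integral kernel of $\mathcal{Z}_{\N,I}$ is real-valued, this operator maps real functions to real functions; so for real-valued $v \in L^2(I)$ (the natural setting, as $\Re F$ is real) there exists a unique real $g \in L^2(I)$ with $\mathcal{Z}_{\N,I} g = v$.

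Given such $g$, I would define the coefficients
\[
  a_n := \frac{2\,\hat g(-\log n)}{\sqrt{2\pi n}}, \qquad n \in \N,
\]
and $F(s) := \sum_{n \geq 1} a_n n^{-s}$. To place $F$ in $\mathscr{H}^2$ I need $\sum_n |a_n|^2 < \infty$, i.e.\ $\sum_n n^{-1}|\hat g(-\log n)|^2 < \infty$. Since $g$ is compactly supported in a bounded interval, $\hat g$ lies in the Paley--Wiener class of exponential type $|I|/2$, and in particular is bounded and Lipschitz on $\R$ (because $tg \in L^1(\R)$ forces $\hat g$ to be $C^1$ with bounded derivative). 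A Riemann-sum comparison using $\log(n+1) - \log n = n^{-1} + O(n^{-2})$ then bounds $\sum_n n^{-1}|\hat g(-\log n)|^2$ by $\int_0^\infty |\hat g(-u)|^2\,du$ plus a summable error, and the integral is controlled by Plancherel.

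To verify $\Re F(1/2 + \im t) = v(t)$ a.e.\ on $I$, the reality of $g$ gives $\hat g(\log n) = \overline{\hat g(-\log n)}$, so $\overline{a_n} = 2\hat g(\log n)/\sqrt{2\pi n}$, and a formal computation yields
\[
  2 \Re F(1/2 + \im t) = \frac{2}{\sqrt{2\pi}} \sum_{n \geq 1} \frac{1}{n}\Bigl( \hat g(-\log n)\,n^{-\im t} + \hat g(\log n)\,n^{\im t} \Bigr),
\]
which by the expansion formula \eqref{z sum formula} is precisely $2\,\mathcal{Z}_{\N,I} g(t) = 2\,v(t)$ for $t \in I$. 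To make this rigorous at $\sigma = 1/2$, I would match the real part of $F(1/2 + \delta + \im t)$ with the $\delta$-regularised version of $\mathcal{Z}_{\N,I} g$ used in the definition of the operator, and pass $\delta \to 0^+$ using the existence of non-tangential boundary values for $\mathscr{H}^2$ functions recalled in the excerpt, together with the $L^2(I)$-continuity of the boundary trace.

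The main obstacle is the invertibility of the frame operator on $L^2(I)$, which is supplied by Corollary \ref{the corollary}; the rest of the argument is a direct matching of the coefficients of $\Re F$ with the sum \eqref{z sum formula}. A secondary technical issue is the rigorous justification of the boundary-value exchange and termwise operations, which is handled by approximating $g$ by $\mathcal{C}^\infty_0(I)$ functions (for which the Dirichlet series defining $F$ converges absolutely on $\sigma = 1/2$) and invoking the standard $\mathscr{H}^2$ boundary theory.
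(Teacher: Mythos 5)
Your proof is correct. The paper itself does not supply a proof of Theorem~A (it is cited from \cite{olsen_saksman09}); it merely remarks that the statement ``may be reformulated as saying that $\mathscr{G}_\N$ forms a frame for $L^2(I)$.'' What you have done is spell out the implication ``frame $\Rightarrow$ Theorem~A'' that this remark leaves implicit, invoking Corollary~\ref{the corollary} (with $K=\N$, where condition~(c) is trivial) exactly as the paper's structure suggests. The explicit construction --- solving $\mathcal{Z}_{\N,I}g=v$ for real $g$, setting $a_n = 2\hat g(-\log n)/\sqrt{2\pi n}$, and matching against \eqref{z sum formula} --- is precisely the canonical frame-coefficient decomposition written out concretely (one checks $a_n = \langle (2\pi\mathcal{Z}_{\N,I})^{-1}(2v), f_n\rangle$ with $f_n = n^{-\im t}/\sqrt n$), and the $\delta$-regularisation at the end is the right way to reconcile the definition of $\mathcal{Z}_{\N,I}$ with the $H^2$ boundary trace of $F(\cdot)/\cdot$ that the paper records. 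Two very minor remarks: the $\ell^2$-summability of $(a_n)$ also follows immediately from the upper frame bound (i.e.\ boundedness of $\mathcal{Z}_{\N,I}$), so the Paley--Wiener/Riemann-sum estimate, while correct, is not strictly needed; and the passage from ``bounded and bounded below'' to ``invertible'' uses that $2\pi\mathcal{Z}_{\N,I}=R^*R$ is positive self-adjoint, which you use implicitly through the frame-operator language --- worth stating if this were written out in full.
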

This result may be reformulated as saying that $\mathscr{G}_\N$, as defined in \eqref{the frame}, forms a frame for $L^2(I)$.
As a consequence, the Corollary \ref{the corollary} implies that we can replace $\Hp^2$ by any of the subspaces
\begin{equation*}
	 \mathscr{H}^2_K = \Set{ \sum_{n \in K} a_n n^{-s} \; : \; \sum_{n \in K} \abs{a_n}^2 < + \infty }
\end{equation*}
for which $K$ satisfies condition $(c)$ of Theorem \ref{lower bound theorem}.
For more on the emerging theory of the space $\mathscr{H}^2$, see \cite{hls97, gordon_hedenmalm99, bayart02, konyagin_queffelec02, hedenmalm_saksman03, bayart03, mccarthy04, helson05a,  queffelec07, saksman_seip08, seip08}.

Finally we mention that the spaces $\Hp^p$ were defined for arbitrary $p>0$ by F.~Bayart in \cite{bayart02}. By an idea of H.~Bohr, they are defined to be the Dirichlet series for which the coefficients are Fourier coefficients of functions in the Hardy space $H^p(\T^\infty)$, where $\T^\infty = \set{(z_1, z_2, \ldots) : z_j \in \T})$ is equipped with the product topology. For $p=2$ this definition coincides with the definition of $\Hp^2$ given above. The behaviour of functions in these spaces is for the most part unknown.

\section*{Acknowledgements}

This paper forms a part of the author's PhD dissertation under the advice of Professor Kristian Seip. The research done for this paper was in large part done during a stay at Washington University in St. Louis, Missouri, USA. The author is grateful to Professor John E. McCarthy for the invitation and stimulating discussions.

\bibliographystyle{amsalpha}
\bibliography{../../../_bibliotek/bibliotek}

\end{document}